\newcommand{\R}{\mathbb{R}}
\newcommand{\N}{\mathbb{N}}
\newcommand{\Z}{\mathbb{Z}}
\newcommand{\Aut}{\mathrm{Aut}}
\newcommand{\p}{\varphi}
\newcommand{\id}{\mathrm{id}}
\newtheorem{theorem}{Theorem}[section]
\newtheorem{lemma}[theorem]{Lemma}
\newtheorem{corollary}[theorem]{Corollary}
\newtheorem{proposition}[theorem]{Proposition}
\newtheorem{definition}[theorem]{Definition}
\newtheorem{rem}[theorem]{Remark}
\newtheorem{es}[theorem]{Example}
\title{An Algebraic Representation Theorem for Linear GENEOs in Machine Learning}
\author{Francesco Conti\thanks{Inria Sophia Antipolis, DataShape team, France. Email: \texttt{francesco.conti@inria.fr}} \and 
Patrizio Frosini\thanks{University of Pisa, 56127 Pisa, Italy. Email: \texttt{patrizio.frosini@unipi.it}} \and 
Nicola Quercioli\thanks{Independent researcher. Email: \texttt{nicola.quercioli@gmail.com}}}
\date{}
\begin{document}

\maketitle

\begin{abstract}
Geometric and Topological Deep Learning enhance machine learning by exploiting geometric and topological structures in data. In this context, Group Equivariant Non-Expansive Operators (GENEOs) provide a principled framework for encoding symmetries and designing efficient, interpretable neural architectures. Originally introduced in Topological Data Analysis and later adopted in Deep Learning, GENEOs unify geometric and topological approaches and include, as a special case, the operator computing persistence diagrams. Their theoretical foundations rely on group actions, equivariance, and compactness properties of operator spaces. GENEOs are defined between data spaces endowed with transformation groups, referred to as perception pairs. 
While previous representation results characterized linear GENEOs acting on identical perception pairs, many applications require operators between heterogeneous ones. In this work, we introduce and prove a new algebraic representation theorem for linear GENEOs acting between different perception pairs. Under mild assumptions on data domains and group actions, this result provides a complete characterization of such operators and implicitly yields a computational method to generate all linear GENEOs between two perception pairs. The proof combines tools from the theory of stochastic matrices, group actions, and invariant measures. In our paper, we also show that the space of linear GENEOs is compact and convex, making easier their use for learning and optimization. 
Our new theorem enlarges the applicability of GENEOs, and allows for their composition with a broader class of current architectures. We show this with an application in image reconstruction.
\end{abstract}

\medskip
\noindent\textbf{Keywords:} Group equivariant non-expansive operator, topological data analysis, group action, permutant measure

\medskip
\noindent\textbf{MSC 2020:} 15B51, 20C35, 47B38, 55N31, 62R40, 68T07, 68T09

\section{Introduction}
Geometric Deep Learning \cite{Br17,BrBr21} and Topological Deep Learning \cite{Ri19,Ho19,Ca20,We21,Zia2024} are two closely related and highly active research areas that have recently emerged to advance Machine Learning through topological and geometric techniques. Within this framework lies the notion of Group Equivariant Non-Expansive Operators (GENEOs).

Originally introduced about ten years ago in Topological Data Analysis to handle invariance of persistent homology groups \cite{FrJa16}, GENEOs later entered Deep Learning and Artificial Intelligence as geometric–topological tools for designing efficient neural networks \cite{bergomi2019towards,Mi23}. 
The key idea is twofold: first, replacing complex networks with smaller ones constructed from a limited number of equivariant components; and second, shifting attention from the structure of data spaces to that of agent/observer spaces, understood in terms of functional operators. 
By encoding symmetries through group actions, these operators can drastically reduce the number of parameters in AI applications while yielding interpretable Machine Learning models \cite{BocchiFMPPGGLBF24,CoBoGiGiPeFr25}.

The connection between GENEOs and Geometric Deep Learning is most evident in their shared reliance on symmetries, group actions, and rich geometric structures, such as differentiable or Riemannian structures on manifolds. Notably, even the computation of the Laplacian and the multiplication by each circulant matrix $\mathrm{circ}(c_0,\ldots,c_{n-1})$ with $\sum_{i=0}^{n-1}|c_i|\le 1$ (to give just two simple examples) can be expressed as GENEOs. Equally compelling is the link with Topological Deep Learning, which builds on topological techniques—for instance, compactness theorems on spaces of operators—and on the remarkable fact that computing persistence diagrams in Topological Data Analysis can be viewed as a special case of a GENEO (see Section~\ref{msG}). Taken together, these observations underscore the role of GENEOs as a unifying framework bridging Geometric and Topological Deep Learning, thereby providing a cross-cutting tool for incorporating topological and geometric contributions into Machine Learning research.

Interest in GENEOs has steadily grown, leading to both a solid mathematical theory and practical applications across scientific domains (cf., e.g., the recent works \cite{Felipe2025,BoFeFr25,BoFrMi25,LaMiBoFrSo25,Ah26}). Once their usefulness in Machine Learning was established, a central challenge arose: designing equivariant operators with prescribed symmetries. In \cite{BoBoBrFrQu23}, a theorem was established that proves a one-to-one correspondence between certain measures, known as permutant measures, and linear GENEOs mapping a dataset—represented by real-valued functions on a discrete space—onto itself.

In many practical applications, however, the input and output data types differ, making the original theorem inapplicable. In this work, we extend the result to this broader setting by employing the generalized permutant measures introduced in \cite{FaFeFr2023}. Our main contribution is a new and more general representation theorem, valid for GENEOs acting between different perception pairs. 
This theorem enables the construction of all linear GENEOs between two given perception pairs, assuming that the data functions have finite domains and that the corresponding group actions are transitive.
Beyond its theoretical value, we demonstrate the practical significance of this result by showing how it can be effectively applied to enhance the performance of autoencoders.

Sections~\ref{Preliminaries} and \ref{Building} review the necessary definitions and preliminary results, setting the stage for Section~\ref{Representation}, where we introduce a representation theorem for linear group equivariant operators (Theorem~\ref{maint}). Building on this, Section~\ref{Linear} presents and proves our main result (Theorem~\ref{teo:linear}). Section~\ref{Compactness} establishes the compactness and convexity of the space of all linear GENEOs under our assumptions. Section~\ref{Application} then provides an application to autoencoders, demonstrating how the operators introduced by our new representation theorem can be effectively leveraged in machine learning. Finally, Section~\ref{Conclusions} concludes the paper by outlining future research directions that pave the way for further developments in this line of work.

\section{Preliminaries}
\label{Preliminaries}

In Sections~\ref{Preliminaries} and \ref{Building}, we recall the main concepts underlying this paper, including GEO and GENEO, as presented in \cite{FaFeFr2023}.

\subsection{Mathematical settings and GENEOs}\label{msG}
Assuming that $Z=\left\{ z_1, \dots, z_\ell \right\} = \left\{ z_k \right\}_{k=1}^\ell$ is a finite set with cardinality $\ell$, we can consider the space $\R^Z:= \{\varphi \colon Z \to \R \}$ of all real-valued functions on $Z$, equipped with the usual uniform norm $\left\| \cdot \right\|_\infty$. We recall that, if $Z$ is finite, $\R^Z \cong \R^\ell$, and $\R^Z$ has the canonical basis $\mathcal{B}_{Z}=\left( \mathds{1}_{z_1}, \dots, \mathds{1}_{z_\ell} \right)$, where for any $k = 1, \dots, \ell$ the function $\mathds{1}_{z_k} \colon X \to \R$ takes value $1$ on $z_k$, and $0$ otherwise. Moreover, $\Aut(Z)$ denotes the group of bijection from $Z$ to itself with respect to the usual composition operation. $\Aut(Z)$  acts on $\R^Z$ by composition on the right. Considering a group $G \subseteq \Aut(Z)$ we say that the couple $\left( \R^Z, G \right)$ is a perception pair.

{\color{black} To avoid overly heavy notation, in what follows we will juxtapose functions both to denote function composition and to denote their product. The context will make the meaning of the symbols clear.}

\begin{definition}
    Given two finite sets $X,Y$, we consider two perception pairs $\left(\R^X, G\right)$, $\left(\R^Y, K\right)$, where $G \subseteq \Aut(X)$ and $K\subseteq\Aut(Y)$. Each map $\left(F, T \right) \colon \left(\R^X, G\right) \to \left(\R^Y, K\right)$ such that $T$ is a homomorphism and $F$ is $T$-equivariant (i.e., $F(\p g) = F(\p)T (g)$ for every $\p \in \Phi, g \in G$) is said to be a \textbf{Group Equivariant Operator} (GEO) from $\left(\R^X, G\right)$ to $\left(\R^Y, K\right)$.
\end{definition}
In our context, $F \colon \R^X \to \R^Y$ is non-expansive if for any $\varphi_1, \varphi_2 \in \R^X$:
\[\lVert F(\varphi_1) - F(\varphi_2)\rVert_\infty \le \lVert \varphi_1 - \varphi_2\rVert_\infty.\]
\begin{definition}
A GEO $\left(F, T\right) \colon \left( \R^X, G \right) \to \left( \R^Y, K \right)$ such that $F$ is non-expansive is called a \textbf{Group Equivariant Non-Expansive Operator} (GENEO).
\end{definition}
We say that a GENEO $(F, T)$ is \textit{linear} if the map $F \colon \R^X \to \R^Y$ is a linear map.

\begin{rem}
\label{rem:PH}
The concept of a GENEO (used in the general setting where finiteness of the signal domains is not required; see, e.g., \cite{bergomi2019towards}) provides a flexible framework for incorporating different operators into a unified setting. A key example in Topological Data Analysis is the computation of persistence diagrams. Let $X$ be a compact metric space, $\Phi$ the set of all continuous functions from $X$ to $\R$, and $\mathrm{Homeo}(X)$ the group of all homeomorphisms of $X$. For a fixed homological degree $k$, 
we consider
the support of the persistence diagram of $\varphi$ in degree $k$, which is a compact subset of the extended half-plane $\bar\Delta^*$, with respect to a suitable metric (see~\cite{frosini2025matching} for details). 
Assuming that the persistence diagram under consideration has no multiple points, we observe that it can be represented both by its support and by the distance function from its support, without any loss of information. In this way, the set
$\mathrm{DGM}_k := \{ \mathrm{dgm}_k(\varphi) \mid \varphi \in \Phi \}$
can be interpreted both as a collection of compact sets and as a space of functions.
Equipping it 
with the Hausdorff distance $d_H$, one can verify that 
$(\mathrm{PD}_k,\mathbf{0}) \colon (\Phi, \mathrm{Homeo}(X)) \to (\mathrm{DGM}_k, \{\id\})$
is a GENEO, where $\mathrm{PD}_k \colon \varphi \mapsto \mathrm{dgm}_k(\varphi)$ and $\mathbf{0}$ is the trivial homomorphism. The equivariance of $(\mathrm{PD}_k,\mathbf{0})$ corresponds to the invariance of persistence diagrams under reparameterizations of $X$, while non-expansivity corresponds to stability under perturbations of $\varphi$ in the max norm.
\end{rem}

\subsection{Decomposition of stochastic matrices}
This section is devoted to recalling some results on stochastic matrices, mainly adapted from \cite{doi:10.1080/03081087.2020.1733461}. Let $\mathcal{M}_{m \times n}$ be the set of all $m \times n$ real-valued matrices. Moreover, with the symbol $\underline{\ell}$ we denote the set $\{1, \dots, \ell\}$ for every $\ell \in \N \setminus \{0\}$.

\begin{definition}
A matrix $A = (a_{ij}) \in \mathcal{M}_{m \times n}$ is \textbf{(right) stochastic} if $a_{ij} \ge 0$ for all $(i,j) \in \underline{m} \times \underline{n}$ and $\sum_{j=1}^n a_{ij}=1$ for all $i \in \underline{m}$.
\end{definition}

A $\{0,1\}$-matrix is a matrix $A=(a_{ij}) \in \mathcal{M}_{m \times n}$, such that $a_{ij} \in \{0,1\}$ for all $(i,j) \in \underline{m} \times \underline{n}$. We shall refer to the $\{0,1\}$-matrices in $\mathcal{M}_{m \times n}$ with exactly one $1$ in each row as \textbf{rectangular (row) permutation matrices} and $\mathcal{RP}_{m \times n}$ is the set of all rectangular permutation matrices of dimension $m \times n$. 
From now on, we will drop the right and row dependence from the definition of stochastic and rectangular permutation matrices, since it is understood. It is a well-known fact that the set of stochastic matrices is a convex set. Before proceeding we recall Theorem $1$ of \cite{doi:10.1080/03081087.2020.1733461}:


\begin{theorem}
\label{thmrectangular}
Every $m \times n$ stochastic matrix can be expressed as a convex combination of $m \times n$ rectangular permutation matrices.
\end{theorem}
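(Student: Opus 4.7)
The plan is to exploit the row-wise structure of stochastic matrices: the rows can be handled independently, since each row of a stochastic matrix is a point in the probability simplex $\Delta^{n-1}\subset\R^n$, and the rows of rectangular permutation matrices are precisely the standard basis vectors $e_1,\ldots,e_n$ (the vertices of this simplex). The observation is that the set of rectangular permutation matrices can be naturally indexed by functions $f\colon \underline{m}\to\underline{n}$, and tensoring the per-row decompositions produces a convex decomposition of the whole matrix.

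Concretely, given a stochastic matrix $A=(a_{ij})\in\mathcal{M}_{m\times n}$, the first step is to write each row $r_i=(a_{i1},\ldots,a_{in})$ as the convex combination $r_i=\sum_{j=1}^n a_{ij}\,e_j$, which is immediate from the defining properties of a stochastic matrix. The second step is to assemble these row-wise decompositions globally. For every function $f\colon\underline{m}\to\underline{n}$, I let $P_f\in\mathcal{RP}_{m\times n}$ be the rectangular permutation matrix whose $i$-th row is $e_{f(i)}$, and I set
\[
\lambda_f \;=\; \prod_{i=1}^m a_{i,f(i)}.
\]

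The third step is a short verification. Non-negativity of $\lambda_f$ is clear, and
\[
\sum_{f\colon\underline{m}\to\underline{n}} \lambda_f \;=\; \sum_{f}\prod_{i=1}^m a_{i,f(i)} \;=\; \prod_{i=1}^m\Bigl(\sum_{j=1}^n a_{ij}\Bigr) \;=\; 1,
\]
by distributivity and the row-sum condition. Finally, the $(i,j)$-entry of $\sum_f \lambda_f P_f$ equals
\[
\sum_{\substack{f\colon\underline{m}\to\underline{n}\\ f(i)=j}} \prod_{k=1}^m a_{k,f(k)} \;=\; a_{ij}\prod_{k\neq i}\Bigl(\sum_{\ell=1}^n a_{k,\ell}\Bigr) \;=\; a_{ij},
\]
so $A=\sum_f\lambda_f P_f$, as required.

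There is no real obstacle here; the only subtlety is recognising that a simultaneous decomposition of all rows arises by taking the product of the row coefficients indexed by a function $f$, and checking that this product structure cleanly reproduces both the normalisation and the individual entries $a_{ij}$. The resulting representation uses at most $n^m$ permutation matrices, but many of the coefficients $\lambda_f$ may vanish when rows of $A$ are themselves extreme.
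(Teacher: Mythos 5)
Your proof is correct. Note, however, that the paper does not actually prove this statement: it recalls it as Theorem~1 of the cited reference on stochastic matrices, so there is no in-paper argument to compare against. Your construction is nonetheless a genuinely self-contained and elementary alternative to the usual route. The standard proofs (and, as far as one can tell, the cited one) treat the set of row-stochastic matrices as a polytope, identify its extreme points with the rectangular permutation matrices, and invoke a Krein--Milman/Carath\'eodory argument or an inductive reduction of nonzero entries; your "product measure" decomposition $\lambda_f=\prod_{i=1}^m a_{i,f(i)}$ instead writes the matrix as a tensor product of the per-row convex combinations, and all three verifications (non-negativity, normalisation via distributivity, recovery of each entry $a_{ij}$) are correct as written. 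What your approach buys is explicitness and brevity: the coefficients are given by a closed formula and the proof needs nothing beyond the row-sum condition. What it gives up is economy of representation: you may use up to $n^m$ matrices, whereas the extreme-point/Carath\'eodory route guarantees a decomposition with at most $m(n-1)+1$ terms (and the paper's Remark on non-uniqueness already illustrates that several decompositions coexist; your formula produces yet another one for that example). For the purposes of this paper, where only existence of \emph{some} convex decomposition is used downstream, your argument fully suffices.
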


\begin{rem}\label{notunique}
    In general, the convex combination stated in Theorem~\ref{thmrectangular} is not unique. As an example, let us consider the following stochastic matrix:
    \[ B = 
    \begin{pmatrix}
        1/2 & 0 & 1/2 \\
        1/3 & 1/3 & 1/3
    \end{pmatrix}.
    \]
    To keep the notation simple, we refer to the $2\times 3$ rectangular permutation matrix with entries $1$ in positions $(1,j_1)$ and $(2,j_2)$ as $R_{j_1,j_2}$. Using this notation, $B$ can be expressed as the following convex combinations:
    \[
        B = \frac{1}{12}R_{1,1} + \frac{5}{24}R_{1,2} + \frac{5}{24}R_{1,3} + \frac{1}{4}R_{3, 1} + \frac{1}{8}R_{3, 2} + \frac{1}{8}R_{3,3}
    \]
    and
    \[
        B = \frac{5}{24}R_{1,1} + \frac{1}{12}R_{1,2} + \frac{5}{24}R_{1,3} + \frac{1}{8}R_{3, 1} + \frac{1}{4}R_{3, 2} + \frac{1}{8}R_{3,3}.
    \]
\end{rem}

\section{Building linear GEOs via $T$-permutant measures}
\label{Building}

\sectionmark{Building linear GEOs}
We consider the finite sets $X = \left\{ x_1, \dots, x_n \right\} = \left\{ x_j \right\}_{j=1}^n$ and $Y = \left\{ y_1, \dots, y_m \right\} = \left\{ y_i \right\}_{i=1}^m$ and the relative function spaces $\R^X$ and $\R^Y$.
We fix a group homomorphism $T \colon G \to K$, where $G \subseteq \Aut(X)$ and $K \subseteq \Aut(Y)$. We denote with $X^Y$ the set of all functions from $Y$ to $X$. For each $h \in X^Y$, we consider the following action of $G$ on $X^Y$:
\begin{equation*}
\label{eq:action}
    \alpha_T \colon G \times X^Y \to X^Y, \quad \left( g, h \right) \mapsto g h T(g^{-1}).
\end{equation*}

In particular, we are interested in the orbits $G(h)$ of $h$ under the action $\alpha_T$, for $h \in X^Y$.

\begin{definition}
    A finite signed measure $\mu$ on the power set of $X^Y$ is called a \textbf{(generalized) $T$-permutant measure} if
    $\mu$ is invariant under the action $\alpha_T$ of $G$, i.e. $\mu(H) = \mu(gHT(g^{-1}))$ for every $g \in G$. Equivalently, we can say that a finite signed measure $\mu$ on the power set of $X^Y$ is a generalized $T$-permutant measure if
    $\mu(\left\{h\right\}) = \mu(\left\{ghT(g^{-1})\right\})$ for every $g \in G$.
\end{definition}

\begin{es}\label{newexample1}
Set $X=\underline{m}\times\underline{n}$ and $Y=\underline{m}$.
We fix $G$ to be the group of all permutations $g$ of $X$ that can be written as
$g=(k,k')$,
where $k$ is a permutation of $Y$ and $k'$ is a permutation of $Z=\underline{n}$, so that 
$g(i,j)=(k(i),k'(j))$.
Then we choose $K$ to be the group of all permutations of $Y$.
The set $X$ can be identified with the set of all cells of an $m\times n$ matrix, while $Y$ represents the set of its rows. Therefore, each $h\in X^{Y}$ can be seen as a map that associates to each row of an $m\times n$ matrix $M$ a cell of $M$.
Consider the homomorphism $T\colon G\to K$
defined by $T(k,k')=k$.
Define a finite signed measure $\mu$ on $X^{Y}$ as follows: for $h\in X^{Y}$, set $\mu(h)=|\operatorname{Im}(h)|$. Then $\mu$ is a generalized $T$-permutant measure.
\end{es}

\begin{es}\label{newexample2}
Using the same notation as in Example~\ref{newexample1}, define a finite signed measure
$\mu$ on $X^{Y}$ as follows: for $h \in X^{Y}$, set
$\mu(h) = 1$ if $h$ maps each row of $M$ into a cell of that row,
and $\mu(h) = 0$ otherwise.
Then $\mu$ is a generalized $T$-permutant
measure.
\end{es}

Moreover, with a slight abuse of notation, we denote with $\mu(h)$ the signed measure of the singleton $\left\{ h \right\}$ for each $h \in X^Y$. Thanks to the concept of generalized $T$-permutant measure, we are able to define a linear GEO associated with it. In fact, we can state the following.

\begin{proposition}
    \label{prop:GEOperm}
    Let $\mu$ be a generalized $T$-permutant measure. We define the operator $F_\mu \colon \R^X \to \R^Y$ associated with the measure $\mu$ by setting
    \[
    F_\mu \left( \p \right) := \sum_{h \in X^Y} \p h \ \mu(h)
    \]
    for every $\p \in \R^X$. Then $(F_\mu,T) \colon (\R^X,G) \to (\R^Y,K)$ is a linear GEO.
\end{proposition}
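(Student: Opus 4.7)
The plan is to verify three properties: that $F_\mu$ is well defined as a map $\R^X \to \R^Y$, that it is linear, and that the pair $(F_\mu, T)$ is $T$-equivariant. Well-definedness is immediate, since for each $h \in X^Y$ the composition $\p h \colon Y \to \R$ is a function on $Y$, and the sum over $h \in X^Y$ is finite because $X$ and $Y$ are finite. Linearity of $F_\mu$ follows because each operator $\p \mapsto \p h$ is $\R$-linear in $\p$, and $F_\mu$ is a finite $\R$-linear combination of such operators with scalar coefficients $\mu(h)$.

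The substantive step is $T$-equivariance, i.e.\ $F_\mu(\p g) = F_\mu(\p)\, T(g)$ for every $g \in G$ and $\p \in \R^X$. I would start from the definition and rewrite
\[
F_\mu(\p g) \;=\; \sum_{h \in X^Y} (\p g) h \, \mu(h) \;=\; \sum_{h \in X^Y} \p (g h)\, \mu(h),
\]
and then perform the change of summation variable $h \mapsto h' := g h T(g^{-1}) = \alpha_T(g,h)$. Since $\alpha_T(g, \cdot)$ is a bijection of $X^Y$ (it is the action of a group element), this reindexing preserves the sum; from $h = g^{-1} h' T(g)$ one obtains $g h = h' T(g)$, while the $T$-permutant invariance of $\mu$ gives $\mu(h) = \mu(\alpha_T(g,h)) = \mu(h')$. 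Combining these two facts yields
\[
F_\mu(\p g) \;=\; \sum_{h' \in X^Y} \p\, h'\, T(g)\, \mu(h') \;=\; \left( \sum_{h' \in X^Y} \p h' \, \mu(h') \right) T(g) \;=\; F_\mu(\p)\, T(g),
\]
where the middle equality is justified pointwise on $Y$ by reading $(\psi T(g))(y) = \psi(T(g)(y))$ for $\psi = \p h'$. Together with linearity and well-definedness, this shows that $(F_\mu, T)$ is a linear GEO.

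The only delicate point is to orchestrate the change of variables so that the factors of $g$ and $T(g^{-1})$ cancel at the right location; once one notices that the action $\alpha_T$ is defined precisely so that $g \cdot g^{-1} h' T(g) = h' T(g)$, the appeal to $T$-permutant invariance of $\mu$ is automatic and the equivariance drops out directly.
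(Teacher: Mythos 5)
Your proposal is correct and follows essentially the same route as the paper: both arguments establish equivariance by inserting $T(g^{-1})T(g)$, reindexing the sum via the bijection $h \mapsto ghT(g^{-1})$ of $X^Y$, and invoking the $\alpha_T$-invariance of $\mu$, with linearity noted as immediate from the definition. The only difference is cosmetic (you phrase the reindexing as a substitution $h = g^{-1}h'T(g)$ rather than directly naming $f := ghT(g^{-1})$), so there is nothing to add.
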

\begin{proof}
    First, we show that $F_\mu$ is $T$-equivariant. It holds that:
    \begin{align*}
        F_\mu \left( \p g \right) & = \sum_{h \in X^Y} \p g h \ \mu(h) \\
        & = \sum_{h \in X^Y} \p g h T (g^{-1}) T (g) \ \mu\left(g h T(g^{-1})\right) \\
        & = \sum_{f \in X^Y} \p f T(g)\ \mu(f) \\
        & = F_\mu \left( \p \right) T(g),
    \end{align*}
    since $\mu(h) = \mu \left( ghT(g^{-1}) \right)$ and the map $h \mapsto f := g h T(g^{-1})$ is a bijection from $X^Y$ to $X^Y$. The linearity of $F_\mu$ directly comes from the definition.
\end{proof}

\section{A representation theorem for linear GEOs via $T$-permutant measures} 
\label{Representation}

Proposition~\ref{prop:GEOperm} provides a first link between generalized $T$-permutant measures and GEOs, and its definition and proof was straightforward. In contrast, proving the other verse of the representation theorem, i.e. associating a generalized $T$-permutant measure with a GEO, is more difficult. In any case, we are able as of now to state the result that we want to prove.


\begin{theorem}
    \label{maint} 
    Assume that $G \subseteq \mathrm{Aut}(X), K \subseteq \mathrm{Aut}(Y)$, and $T \colon G \to K$ is a group homomorphism. Moreover, suppose that $T(G)$ transitively acts on the finite set $Y$, and $F$ is a map from $\mathbb{R}^X$ to $\mathbb{R}^Y$. The map $(F,T)$ is a linear GEO from $(\mathbb{R}^X, G)$ to $(\mathbb{R}^Y, K)$ if and only if a $T$-permutant measure $\mu$ exists such that $F(\varphi) = \sum_{h \in X^Y} \varphi h\  \mu(h)$ for every $\varphi \in \mathbb{R}^X$.
\end{theorem}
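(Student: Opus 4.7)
The ``if'' direction is exactly Proposition~\ref{prop:GEOperm}, so I concentrate on the converse. The plan is to construct, from a given linear GEO $(F,T)$, an explicit $T$-permutant signed measure $\mu$ with $F=F_\mu$. Since $T(G)$ acts transitively on $Y$, I fix a base point $y_1\in Y$ and a section $\{\pi_i\}_{i=1}^m\subseteq G$ with $T(\pi_i)(y_1)=y_i$ and $\pi_1=\id$; let $G_1=\{g\in G:T(g)(y_1)=y_1\}$ denote the stabilizer. Writing the matrix entries $a_{ij}=F(\mathds{1}_{x_j})(y_i)$ and using $\mathds{1}_{x_j}g=\mathds{1}_{g^{-1}(x_j)}$, the equivariance of $F$ yields the symmetry $a_{k,g^{-1}(j)}=a_{T(g)(k),j}$ for every $g\in G$. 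Applying it with $k=1$ and $g=\pi_i$ gives $a_{ij}=r_{\pi_i^{-1}(j)}$, where $r_j:=a_{1j}$, so that $F$ is entirely determined by its first row; applying it with $k=1$ and $g\in G_1$ shows that $r$ is constant on each $G_1$-orbit $O$ of $X$, with common value I denote $r(O)$.

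The core of the argument is the construction, for every $G_1$-orbit $O\subseteq X$, of a ``building-block'' $T$-permutant measure $\mu_O$ whose operator reproduces the indicator $\mathds{1}_O$ on the first row. I take
\[
\mathcal{H}^{O}=\{\,h\in X^Y:\ h(y_i)\in\pi_i(O)\ \text{for every}\ i\in\underline{m}\,\}
\]
and set $\mu_O(h)=1/|O|^{m-1}$ for $h\in\mathcal{H}^O$ and $\mu_O(h)=0$ otherwise. A direct count of the $h\in\mathcal{H}^O$ with $h(y_i)=x_j$ gives $F_{\mu_O}(\mathds{1}_{x_j})(y_i)=\mathds{1}\{\pi_i^{-1}(j)\in O\}$, which at $i=1$ coincides with $\mathds{1}_O(x_j)$. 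To check that $\mu_O$ is $T$-permutant it suffices to verify that $\mathcal{H}^O$ is $\alpha_T$-invariant: if $i'=T(g^{-1})(i)$ then $g\pi_{i'}$ sends $y_1$ to $y_i$, hence $g\pi_{i'}=\pi_i g_1$ for some $g_1\in G_1$, and therefore $g\pi_{i'}(O)=\pi_i g_1(O)=\pi_i(O)$ because $O$ is a $G_1$-orbit; this equality is precisely what makes $h(y_{i'})\in\pi_{i'}(O)$ equivalent to $(ghT(g^{-1}))(y_i)=g(h(y_{i'}))\in\pi_i(O)$.

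Finally, setting $\mu:=\sum_O r(O)\mu_O$ — a $T$-permutant measure as a linear combination of such — the operator $F_\mu$ is $T$-equivariant by Proposition~\ref{prop:GEOperm} and has first row $\sum_O r(O)\mathds{1}_O=r$; the first-row uniqueness established above then forces $F_\mu=F$. The delicate point is clearly the construction of the $\mu_O$'s and the verification of their $T$-permutance. This is where the transitivity hypothesis plays an essential role, because the set $\pi_i(O)$ is well-defined independently of the chosen section within the coset $\pi_i G_1$ precisely when $O$ is a $G_1$-orbit; without transitivity one could not reduce the operator to data on a single row, and the building-block measures $\mu_O$ could not be defined in this way.
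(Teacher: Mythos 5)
Your proof is correct, but it reaches the hard direction by a genuinely different route from the paper. Both arguments start from the same equivariance symmetry of the matrix entries (Lemma~\ref{rijhm}) and from the observation that, by transitivity, every row is a permutation of the first (Lemma~\ref{lempermutationrow}). From there the paper splits $F$ into its positive and negative parts, rescales $B^\oplus$ and $B^\ominus$ to stochastic matrices, invokes the decomposition of rectangular stochastic matrices into rectangular permutation matrices (Theorem~\ref{thmrectangular}) to obtain non-canonical coefficients $c^\oplus(h), c^\ominus(h)$, and then symmetrizes them by averaging over the $\alpha_T$-orbits, which requires the extra computation of Proposition~\ref{carperm+-} to check that the averaged measures still reproduce $F^\oplus$ and $F^\ominus$. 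You instead exploit transitivity head-on: after reducing $F$ to its first row $r$ and showing $r$ is constant on the orbits $O$ of the stabilizer $G_1$ of the base point, you build the explicit invariant measures $\mu_O$ uniformly supported on the pairwise disjoint sets $\mathcal{H}^O=\{h : h(y_i)\in\pi_i(O)\ \text{for all } i\in\underline{m}\}$, and the identity $g\pi_{i'}=\pi_i g_1$ with $g_1\in G_1$ is exactly the right verification of $\alpha_T$-invariance. Your construction is self-contained (no appeal to the stochastic-matrix decomposition theorem), canonical once $y_1$ is fixed, and has a pleasant bonus: since the sets $\mathcal{H}^O$ are disjoint, $\sum_{h}\lvert\mu(h)\rvert=\sum_O \lvert r(O)\rvert\,\lvert O\rvert=\sum_{j=1}^n\lvert b_{1j}\rvert$ is immediate, which is the computational heart of Lemma~\ref{lemma:ne} and would shorten the passage to Theorem~\ref{teo:linear}. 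What the paper's longer route buys in exchange is the $c^\oplus,c^\ominus,\mu^\oplus,\mu^\ominus$ machinery that it reuses afterwards, in particular for the mutual singularity statement of Proposition~\ref{prop:singular}.
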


\begin{rem}
In the case where the action of $T(G)$ on $Y$ is not transitive, 
this action decomposes $Y$ into several distinct orbits $Y_r$.
Even in this case, however, we can take advantage of Theorem~\ref{maint}, applying it to study each restriction $F(\p)_{\vert Y_r}:Y_r\to\R$ as the orbit $Y_r$ varies. This is possible since the action of $T(G)$ on $Y_r$ is clearly transitive.
\end{rem}

Throughout this paper, we use $\varphi$ to denote a function
$\varphi : X \to \mathbb{R}$, and $[\varphi]$ to denote its associated column
vector representation with respect to the standard basis of $\mathbb{R}^X$.
Similarly, we use $F$ to denote a linear map
$F : \mathbb{R}^X \to \mathbb{R}^Y$, and $[F]$ to denote its associated matrix
with respect to the standard bases $\mathcal{B}_X=([{\mathds{1}}_{x_1}],\ldots,[{\mathds{1}}_{x_n}])$ of $\R^X$ and $\mathcal{B}_Y=([{\mathds{1}}_{y_1}],\ldots [{\mathds{1}}_{y_m}])$ of $\R^Y$. The remainder of this section is devoted to the proof of Theorem~\ref{maint}.
We split the proof into several substeps, both to improve readability and to
present each step as a separate result. We stress the fact that one verse of the proof has already been proved by Proposition~\ref{prop:GEOperm}. Let us assume that $(F,T) \colon (\R^X,G) \to (\R^Y,K)$ is a linear GEO. Moreover, let $B=(b_{ij})$ be the matrix
associated with $F$ with respect to the standard bases of $\R^X$ and $\R^Y$. The meanings of the symbols \(B\) and \([F]\) are therefore the same, but we will use the latter when we wish to emphasize that it is the matrix associated with the chosen operator \(F\). Given a natural number \(\ell \in \mathbb{N} \setminus \{0\}\) and a set \(Z\) with \(\lvert Z \rvert = \ell\), for every permutation \(p \colon Z \to Z\) we denote by \(\sigma_p \colon \underline{\ell} \to \underline{\ell}\) the function defined by setting \(\sigma_p(j) = i\) if and only if \(p(z_j) = z_i\). We observe that \(\sigma_{p^{-1}} = \sigma_p^{-1}\) and that
$\mathds{1}_{x_j} g = \mathds{1}_{g^{-1}(x_j)}$.
A similar identity holds for \(\mathds{1}_{y_i} T(g)\).

\begin{lemma}\label{rijhm}
For any $g \in G$, we have that $b_{ij}=b_{\sigma_{T(g)}(i)\sigma_g(j)}$ for every $(i,j) \in \underline{m} \times \underline{n}$.
\end{lemma}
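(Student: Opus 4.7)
The plan is to apply the equivariance relation $F(\varphi g)=F(\varphi)T(g)$ to the basis elements $\mathds{1}_{x_j}$ and then read off the resulting identity between matrix entries. Since $B=[F]$ is the matrix of $F$ with respect to the standard bases, we have
\[
F(\mathds{1}_{x_j}) \;=\; \sum_{i=1}^{m} b_{ij}\,\mathds{1}_{y_i}
\]
for every $j \in \underline{n}$, i.e.\ the $j$-th column of $B$ encodes $F(\mathds{1}_{x_j})$.

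Fix $g \in G$ and $j \in \underline{n}$. For the left-hand side of the equivariance identity, the formula $\mathds{1}_{x_j} g = \mathds{1}_{g^{-1}(x_j)}$ recalled just before the lemma together with $\sigma_{g^{-1}}=\sigma_g^{-1}$ gives $\mathds{1}_{x_j} g = \mathds{1}_{x_{\sigma_g^{-1}(j)}}$, so
\[
F(\mathds{1}_{x_j} g) \;=\; \sum_{i=1}^{m} b_{i,\sigma_g^{-1}(j)}\,\mathds{1}_{y_i}.
\]
For the right-hand side, I will use the analogous identity $\mathds{1}_{y_i} T(g) = \mathds{1}_{T(g)^{-1}(y_i)} = \mathds{1}_{y_{\sigma_{T(g)}^{-1}(i)}}$, combined with linearity:
\[
F(\mathds{1}_{x_j})\,T(g) \;=\; \sum_{i=1}^{m} b_{ij}\,\mathds{1}_{y_{\sigma_{T(g)}^{-1}(i)}}
\;=\; \sum_{i'=1}^{m} b_{\sigma_{T(g)}(i'),\,j}\,\mathds{1}_{y_{i'}},
\]
where I reindexed by $i'=\sigma_{T(g)}^{-1}(i)$.

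Equating the coefficients of $\mathds{1}_{y_i}$ in the two expressions obtained above yields $b_{i,\sigma_g^{-1}(j)} = b_{\sigma_{T(g)}(i),\,j}$ for every $i \in \underline{m}$, $j \in \underline{n}$. Substituting $j \mapsto \sigma_g(j)$ in this equality gives exactly $b_{ij} = b_{\sigma_{T(g)}(i),\,\sigma_g(j)}$, as claimed. The only delicate point, and hence the only place where one could stumble, is the bookkeeping of $\sigma_g$ versus $\sigma_g^{-1}$: the right action of $g$ on indicator functions produces $g^{-1}$ inside the argument, so one must carefully invert once when translating into index notation and then reindex to recover the stated form.
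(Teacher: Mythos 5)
Your proposal is correct and follows essentially the same route as the paper's proof: apply the equivariance identity to the basis functions $\mathds{1}_{x_j}$, use $\mathds{1}_{x_j}g=\mathds{1}_{g^{-1}(x_j)}$ and its analogue for $T(g)$, reindex, and compare coefficients. The handling of $\sigma_g$ versus $\sigma_g^{-1}$ matches the paper's computation exactly.
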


\begin{proof}
Let us choose a function ${\mathds{1}}_{x_j}$ and a permutation $g \in G$.
By equivariance we have that
\[{F}({\mathds{1}}_{x_j}g )= {F}({\mathds{1}}_{x_j})T(g).\]
The left-hand side of the equation can be rewritten as:
\[
{F}({\mathds{1}}_{x_j}g )= {F}({\mathds{1}}_{g^{-1}(x_j)} )= \sum_{i=1}^{m}b_{i\sigma_g^{-1}(j)}{\mathds{1}}_{y_i}.
\]
On the right-hand side, we get
\begin{align*}
{F}({\mathds{1}}_{x_j})T(g)&=\left(\sum_{i=1}^{m}b_{ij}{\mathds{1}}_{y_i}\right) T(g)
\\&= \sum_{i=1}^{m}b_{ij}({\mathds{1}}_{y_i}T(g))
\\&= \sum_{i=1}^{m}b_{ij}({\mathds{1}}_{T(g^{-1})(y_i)})
\\&=\sum_{s=1}^{m}b_{\sigma_{T(g)}(s)j}{\mathds{1}}_{y_s},
\end{align*}
by setting $y_s=T(g^{-1})(y_i)$,
{\color{black} and hence $i=\sigma_{T(g)}(s)$.}
Therefore, we obtain the following equation: 
\[
\sum_{i=1}^{m}b_{i\sigma_g^{-1}(j)}{\mathds{1}}_{y_i}=\sum_{s=1}^{m}b_{\sigma_{T(g)}(s)j}{\mathds{1}}_{y_s}.
\]
This immediately implies that $b_{i\sigma_g^{-1}(j)}=b_{\sigma_{T(g)}(i)j}$, for any $i \in \underline{m}$.
Since this equality holds for
any $j \in \underline{n}$ and any $g \in G$, we have that $b_{ij}=b_{\sigma_{T(g)}(i)\sigma_g(j)}$ for every $(i,j) \in \underline{m} \times \underline{n}$ and every $g\in G$.
\end{proof}

For the rest of the section, we will assume that $T(G)$ transitively acts on $Y$. 
These assumptions allow us to prove the following result.

\begin{lemma}\label{lempermutationrow}
If the action of $T(G) \subseteq K$ on $Y$ is transitive, an $n$-tuple of real numbers $\beta=(\beta_1,\ldots,\beta_n)$ exists such that each row of ${B}$ can be obtained by permuting $\beta$.
\end{lemma}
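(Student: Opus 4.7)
The plan is to take the first row of $B$ as the candidate tuple $\beta$, and then use the transitivity of $T(G)$ on $Y$ together with Lemma~\ref{rijhm} to show that every other row is a column-permutation of this first row. More precisely, I would define $\beta=(b_{11},b_{12},\ldots,b_{1n})$ and show that for each $i \in \underline{m}$ the row $(b_{i1},\ldots,b_{in})$ is obtained from $\beta$ by a permutation depending on $i$.

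The first step is to invoke transitivity: given $i \in \underline{m}$, I pick $g_i \in G$ such that $T(g_i)(y_i)=y_1$, which by the definition of $\sigma_{(-)}$ is the same as saying $\sigma_{T(g_i)}(i)=1$. Such a $g_i$ exists precisely because $T(G)$ acts transitively on $Y$. The second step is to apply Lemma~\ref{rijhm} with this specific $g_i$: for every $j \in \underline{n}$,
\[
b_{ij} \;=\; b_{\sigma_{T(g_i)}(i)\,\sigma_{g_i}(j)} \;=\; b_{1,\sigma_{g_i}(j)} \;=\; \beta_{\sigma_{g_i}(j)}.
\]
Since $g_i \in G \subseteq \Aut(X)$, the map $\sigma_{g_i}$ is a permutation of $\underline{n}$, and hence the $i$-th row of $B$ is exactly $\beta$ read through the permutation $\sigma_{g_i}$ of its indices. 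This is precisely the statement to be proved, so setting $\beta=(b_{11},\ldots,b_{1n})$ concludes the argument.

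There is essentially no serious obstacle in this lemma: everything reduces to correctly matching indices in the symmetry identity provided by Lemma~\ref{rijhm}. The only point requiring minor care is the direction of the substitution, namely that transitivity is used to make the \emph{row} index on the right-hand side of $b_{ij}=b_{\sigma_{T(g)}(i)\,\sigma_g(j)}$ collapse to $1$ for an appropriately chosen $g$; the corresponding permutation of columns $\sigma_g$ then witnesses that each row of $B$ is a rearrangement of a common $n$-tuple. No further use of non-expansivity or of the algebraic structure on $\R^X,\R^Y$ is needed at this stage.
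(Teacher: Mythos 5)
Your proof is correct and follows essentially the same route as the paper: both use transitivity to choose $g_i \in G$ with $T(g_i)(y_i)=y_1$ (so that $\sigma_{T(g_i)}(i)=1$), then apply Lemma~\ref{rijhm} to conclude that the $i$-th row equals the first row read through the permutation $\sigma_{g_i}$. The identification of $\beta$ with the first row of $B$ and the handling of the index substitution both match the paper's argument.
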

\begin{proof}
Since $T(G)$ acts transitively, for every $i \in \underline{m}$ there exists
$k_{i \mapsto 1}\in T(G)$ such that $k_{i\mapsto1}(y_i)=y_1$.
Considering the $\overline \imath$-th row of $B$ and $k_{\overline \imath \mapsto 1} \in T(G)$, there exists $\overline g \in G$ such that $T(\overline g)=k_{\overline \imath \mapsto 1}$. Hence, by Lemma~\ref{rijhm}, we have that $b_{\overline \imath j}
=b_{\sigma_{T (\overline g)}(\overline \imath) \sigma_{\overline g}(j)}
=b_{\sigma_{k_{\overline\imath \mapsto 1}}(\overline \imath) \sigma_{\overline g}(j)}
=b_{1\sigma_{\overline g}(j)}$, for any $j \in \underline{n}$.
Since $\sigma_{\overline g}$ is a permutation, the $\overline\imath$-th row is a permutation of the first row.
\end{proof}


In the following, it is helpful to split $F$ in its positive and negative parts. That is, let us consider the linear maps ${F^\oplus},{F^\ominus}:\R^X\to\R^Y$ defined by setting ${F^\oplus}(\mathds{1}_{x_j}) := \sum_{i=1}^{m}\max \left\{b_{ij},0\right\} \mathds{1}_{y_i}$ and ${F^\ominus}(\mathds{1}_{x_j}) := \sum_{i=1}^{m}\max \left\{-b_{ij},0\right\} \mathds{1}_{y_i}$ for every index $j\in \underline{n}$. 
{\color{black} By applying Lemma~\ref{rijhm},} one can easily check that the following properties hold:

\begin{enumerate}
  \item ${F^\oplus},{F^\ominus}$ are $T$-equivariant linear maps;
  \item The matrices $B^\oplus$ and $B^\ominus$ associated with ${F^\oplus}$ and ${F^\ominus}$ with respect to the bases $\mathcal{B}_X$ and $\mathcal{B}_Y$ are $B^\oplus =\left(b^\oplus_{ij}\right) = \left( \max\left\{b_{ij},0\right\}\right)$ and $B^\ominus = \left(b^\ominus_{ij}\right)=\left(\max\left\{-b_{ij},0\right\}\right)$, respectively. In particular, $B^\oplus$ and $B^\ominus$ are non-negative matrices;
  \item $b_{ij}^\oplus \neq 0 \implies b_{ij}^\ominus = 0$, and $b_{ij}^\ominus \neq 0 \implies b_{ij}^\oplus = 0$ for any pair of indices $i,j$;
  \item $F=F^\oplus - F^\ominus$ and $B={B^\oplus}-{B^\ominus}$;
    \item \label{property5} Lemma~\ref{lempermutationrow} and the definitions of $B^\oplus, B^\ominus$ imply that two $n$-tuples of positive real numbers $\beta^\oplus=\left(\beta^\oplus_1,\dots,\beta^\oplus_{n}\right)$, $\beta^\ominus=\left(\beta^\ominus_1,\dots,\beta^\ominus_{n}\right)$ exist such that each row of ${B^\oplus}$ can be obtained by permuting $\beta^\oplus$, and each row of ${B^\ominus}$ can be obtained by permuting $\beta^\ominus$.
\end{enumerate}

The next step in order to prove Theorem~\ref{maint} is to express $F^\oplus$ and $F^\ominus$ as weighted sums of $\varphi h$, for $h \in X^Y$. First, we need to establish a connection between the elements of $X^Y$ and the rectangular permutation matrices. Each function $h \colon Y \to X$ can be associated with a $m \times n$ rectangular permutation matrix $R(h)=(r_{ij}(h))$ defined by setting $r_{ij}(h)=1$ if $h(y_i)=x_j$, and $r_{ij}(h)=0$ otherwise. In the case $X = Y$ and $h \colon X \to X$ is a permutation, we denote as $P(h)$ such square matrix, which is the usual permutation matrix. 
Hence, there is a bijection between $X^Y$ and the set $\mathcal{RP}_{m \times n}$
of all $m \times n$ rectangular permutation matrices, given by $h \mapsto R(h)$.

\begin{rem}\label{grhughaepj}
Assume that $h$ is a function from $Y$ to $X$ and $\mathcal{R}_h$ denotes the linear operator that sends each function $\varphi$ in $\R^X$ to $\varphi h$ in $\R^Y$. One could easily check that $R(h)$ is the matrix associated with the operator $\mathcal{R}_h$ with respect to the bases $\mathcal{B}_X$ for $\R^X$ and $\mathcal{B}_Y$ for $\R^Y$. 
Moreover, considering two functions ${f_1} \colon X \to Y$, ${f_2} \colon Y \to Z$, and a function $\p\in \R^Z$, we know that $\mathcal{R}_{{f_2} {f_1}}(\varphi)= \varphi ({f_2} {f_1}) = (\varphi {f_2}) {f_1} = \mathcal{R}_{f_1} \mathcal{R}_{f_2}(\varphi)$. Hence, $ R({f_2}{f_1})=R({f_1}) R({f_2})$.
\end{rem}

\begin{proposition}
\label{prop:sum}
    For every $h \in X^Y$ there exist two non-negative real numbers $c^\oplus(h), c^\ominus(h)$ such that
    \begin{align*}
    F^\oplus(\varphi) & = \sum_{h \in X^Y}c^\oplus(h) \varphi h, \\
    F^\ominus(\varphi) & = \sum_{h \in X^Y}c^\ominus(h) \varphi h,
    \end{align*}
    for every $\varphi \in \R^X$.
    Therefore  
\begin{align*}
    B^\oplus & = \sum_{h \in X^Y} c^\oplus(h)\, R(h), \\
    B^\ominus & = \sum_{h \in X^Y} c^\ominus(h)\, R(h).
\end{align*}
\end{proposition}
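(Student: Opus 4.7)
The plan is to reduce the claim about $F^\oplus$ and $F^\ominus$ to Theorem~\ref{thmrectangular} on the convex decomposition of stochastic matrices, using the bijection $h\mapsto R(h)$ between $X^Y$ and $\mathcal{RP}_{m\times n}$ together with the matrix identity of Remark~\ref{grhughaepj}. Because properties (2) and (5) above give that $B^\oplus$ and $B^\ominus$ are non-negative matrices whose rows are all permutations of the same $n$-tuple, they already satisfy everything a stochastic matrix needs, up to a uniform scaling factor.

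Concretely, I would first set $s^\oplus:=\sum_{j=1}^{n}\beta^\oplus_j$ and $s^\ominus:=\sum_{j=1}^{n}\beta^\ominus_j$. By property (5), every row of $B^\oplus$ sums to $s^\oplus$, and every row of $B^\ominus$ sums to $s^\ominus$. If $s^\oplus=0$, then $B^\oplus$ is the zero matrix and one sets $c^\oplus(h)=0$ for every $h\in X^Y$; similarly for $B^\ominus$. Otherwise, the matrix $\tfrac{1}{s^\oplus}B^\oplus$ is stochastic in the sense of Section~2.2, so Theorem~\ref{thmrectangular} supplies non-negative coefficients $\lambda^\oplus_h$ with $\sum_{h\in X^Y}\lambda^\oplus_h=1$ and
\[
\tfrac{1}{s^\oplus}B^\oplus \;=\; \sum_{h\in X^Y}\lambda^\oplus_h\,R(h),
\]
where the sum is indexed via the bijection $h\mapsto R(h)$. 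Setting $c^\oplus(h):=s^\oplus\lambda^\oplus_h\ge 0$ and multiplying through by $s^\oplus$ yields the desired matrix identity $B^\oplus=\sum_{h\in X^Y}c^\oplus(h)\,R(h)$. The same argument, applied to $B^\ominus$, produces the coefficients $c^\ominus(h)\ge 0$.

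To pass from the matrix identity back to the operator identity, I invoke Remark~\ref{grhughaepj}: the matrix $R(h)$ represents the linear operator $\mathcal R_h\colon\varphi\mapsto\varphi h$ with respect to the bases $\mathcal B_X$ and $\mathcal B_Y$. Since the correspondence between linear maps $\R^X\to\R^Y$ and matrices in $\mathcal M_{m\times n}$ is linear, the identities above translate directly into
\[
F^\oplus(\varphi)=\sum_{h\in X^Y}c^\oplus(h)\,\varphi h,\qquad F^\ominus(\varphi)=\sum_{h\in X^Y}c^\ominus(h)\,\varphi h,
\]
for every $\varphi\in\R^X$, which is precisely the statement.

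The only genuinely non-trivial ingredient here is Theorem~\ref{thmrectangular}, which is invoked as a black box; the main obstacle in writing out the argument is therefore minor and bookkeeping in nature, namely checking that properties (2) and (5) indeed combine to give a matrix that is a uniform rescaling of a stochastic matrix, and handling cleanly the degenerate case $s^\oplus=0$ (respectively $s^\ominus=0$). As noted in Remark~\ref{notunique}, the coefficients $c^\oplus(h)$ and $c^\ominus(h)$ produced in this way need not be unique; uniqueness is not asserted in the statement and will, presumably, be addressed later by exploiting the $G$-equivariance to construct a canonical $T$-permutant measure.
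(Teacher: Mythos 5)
Your proposal is correct and follows essentially the same route as the paper's proof: normalize $B^\oplus$ (resp.\ $B^\ominus$) by the common row sum $\lVert\beta^\oplus\rVert_1$ guaranteed by Lemma~\ref{lempermutationrow}, apply Theorem~\ref{thmrectangular} to the resulting stochastic matrix, rescale the convex coefficients to obtain $c^\oplus(h)$, and translate back to operators via the bijection $h\mapsto R(h)$ and Remark~\ref{grhughaepj}, treating the zero-operator case separately. Your closing remarks on non-uniqueness and the later orbit-averaging also match Remark~\ref{gammanotunique} and the subsequent development in the paper.
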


\begin{proof}
    Let us start by considering the statement concerning $c^\oplus$ and $F^\oplus$. \textcolor{black}{If $F^\oplus$ is the zero operator, the statement follows immediately.} {\color{black} Otherwise, applying Lemma~\ref{lempermutationrow} and setting $\lVert \beta^\oplus\rVert_1:= \sum_{i=1}^n \beta^\oplus_i\neq 0$}, we note that the matrix $\frac{1}{\lVert \beta^\oplus\rVert_1} B^\oplus$ is stochastic. 
    Hence, Remark~\ref{grhughaepj} and Theorem~\ref{thmrectangular} imply that
    \begin{align*}
    [F^\oplus(\varphi)] & = \lVert \beta^\oplus\rVert_1 \left( \frac{1}{\lVert \beta^\oplus\rVert_1}B^\oplus [\varphi]\right) \\
    & = \lVert \beta^\oplus\rVert_1 \sum_{h \in X^Y}\gamma^\oplus (h)R(h)[\varphi] \\
    & = \sum_{h \in X^Y} c^\oplus(h) [\mathcal{R}_h (\varphi)] \\
    & = \left[\sum_{h \in X^Y} c^\oplus(h) \varphi h\right],
    \end{align*}
    where $\gamma^\oplus(h)$ is a nonnegative real number, $\sum_{h \in X^Y}\gamma^\oplus(h) = 1$, and $c^\oplus(h)= \lVert\beta^\oplus\rVert_1 \gamma^\oplus(h)\ge 0$ for any $h \in X^Y$.
    {\color{black} Therefore, $F^\oplus(\varphi)=\sum_{h \in X^Y} c^\oplus(h) \varphi h$.}
    \textcolor{black}{
    Moreover, it follows that
    \begin{align*}
    B^\oplus & = \lVert \beta^\oplus\rVert_1 \left( \frac{1}{\lVert \beta^\oplus\rVert_1}B^\oplus \right) \\
    & = \lVert \beta^\oplus\rVert_1 \sum_{h \in X^Y}\gamma^\oplus (h)R(h) \\
    & = \sum_{h \in X^Y} c^\oplus(h) R(h).
    \end{align*}
    }
    The proof of the statements concerning $c^\ominus$, $F^\ominus$, and $B^\ominus$ is analogous.
\end{proof}

\begin{rem}\label{gammanotunique}
It is important to observe that, due to Remark~\ref{notunique}, the choice of the
coefficients $\gamma^\oplus(h)$ and $\gamma^\ominus(h)$ in the proof of
Proposition~\ref{prop:sum} is not unique in general.
\end{rem}

We recall that our aim is to associate a generalized $T$-permutant measure to a
GEO $(F,T)$, and we split this task into its positive and negative parts
$F^\oplus$ and $F^\ominus$. At this stage, the function $h \mapsto c^\oplus(h)$
is not a generalized $T$-permutant measure. In order to obtain a generalized
$T$-permutant measure, we need to average it over the orbits of $h$ under the
action of $G$. First, we show that this averaging procedure is well defined.

\begin{proposition}\label{propc}
If $h_1, h_2 \in X^Y$ and there exists $t \in \underline{m}$ such that $h_1(y_t) = h_2(y_t)$, then either $c^\oplus(h_1)=0$, or $c^\ominus(h_2)=0$ or both values are null.
\end{proposition}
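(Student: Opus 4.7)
The plan is to exploit property (3) from the list after Lemma~\ref{lempermutationrow}: the supports of $B^\oplus$ and $B^\ominus$ are disjoint. Combined with the non-negativity of all coefficients $c^\oplus(h), c^\ominus(h)$ given by Proposition~\ref{prop:sum}, this forces a dichotomy at every matrix entry, which I translate back into a dichotomy on individual functions $h \in X^Y$.

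First I would fix the shared value: set $x_j := h_1(y_t) = h_2(y_t)$, so $j \in \underline{n}$ is uniquely determined. By the definition of $R(h)$, this immediately gives $R(h_1)_{tj} = R(h_2)_{tj} = 1$. Next, I would extract the $(t,j)$-entry of the matrix identities
\[
B^\oplus = \sum_{h \in X^Y} c^\oplus(h)\, R(h), \qquad B^\ominus = \sum_{h \in X^Y} c^\ominus(h)\, R(h)
\]
supplied by Proposition~\ref{prop:sum}. Since every $R(h)_{tj} \in \{0,1\}$ and every coefficient is non-negative, isolating the $h_1$ (resp.\ $h_2$) term yields
\[
b^\oplus_{tj} \geq c^\oplus(h_1), \qquad b^\ominus_{tj} \geq c^\ominus(h_2).
\]

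Now I would argue by contradiction, or equivalently by cases: suppose $c^\oplus(h_1) > 0$. Then $b^\oplus_{tj} > 0$, and property (3) forces $b^\ominus_{tj} = 0$. Because $b^\ominus_{tj}$ is a sum of non-negative terms $c^\ominus(h) R(h)_{tj}$, each one must vanish; specializing to $h = h_2$ and using $R(h_2)_{tj} = 1$ gives $c^\ominus(h_2) = 0$. By the symmetric argument, if instead $c^\ominus(h_2) > 0$ then $c^\oplus(h_1) = 0$. Thus in all cases at least one of $c^\oplus(h_1)$, $c^\ominus(h_2)$ is zero, possibly both, which is exactly the claim.

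I do not foresee a real obstacle here: the proof is essentially a one-line consequence of the entrywise disjointness of $B^\oplus$ and $B^\ominus$ once the evaluation points are correctly identified. The only subtlety worth being explicit about is that the hypothesis gives a \emph{single} coincident index $t$, which is already enough because one common entry of $R(h_1)$ and $R(h_2)$ suffices to couple the two sums at the position $(t,j)$ where property (3) is applied.
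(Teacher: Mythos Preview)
Your proposal is correct and follows essentially the same approach as the paper: both arguments fix the common value $x_j=h_1(y_t)=h_2(y_t)$, derive the inequalities $b^\oplus_{tj}\ge c^\oplus(h_1)$ and $b^\ominus_{tj}\ge c^\ominus(h_2)$ from the decomposition in Proposition~\ref{prop:sum}, and then invoke the disjointness of the supports of $B^\oplus$ and $B^\ominus$. The only cosmetic difference is that the paper obtains these inequalities by evaluating $F^\oplus(\mathds{1}_{x_j})$ at $y_t$, whereas you read them off directly from the matrix identity $B^\oplus=\sum_h c^\oplus(h)R(h)$.
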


\begin{proof}
    Since there exist $t \in \underline{m}$ {\color{black} and $s\in\underline{n}$} such that $x_s=h_1(y_t) = h_2(y_t)$, we can consider $\mathds{1}_{x_s}$, and get that
    \begin{align*}
        [F^\oplus \left( \mathds{1}_{x_s} \right)] & = B^\oplus [\mathds{1}_{x_s}] \\
        & = \left[\sum_{i = 1}^m b^\oplus_{is}\mathds{1}_{y_i}\right],
    \end{align*}
    {\color{black} i.e., $F^\oplus \left( \mathds{1}_{x_s} \right)=\sum_{i = 1}^m b^\oplus_{is}\mathds{1}_{y_i}$.}
    Hence, we have that
\begin{align*}
        b^\oplus_{ts} & = \left( \sum_{i = 1}^m b^\oplus_{is}\mathds{1}_{y_i}\right)(y_t)
        \\ & = F^\oplus \left( \mathds{1}_{x_s} \right)(y_t)
        \\& = \sum_{h \in X^Y}c^\oplus(h) \mathds{1}_{x_s}h(y_t)
        \\ & \ge c^\oplus(h_1) \mathds{1}_{x_s}h_1(y_t)
        \\ & = c^\oplus(h_1) \mathds{1}_{x_s}(x_s)
        \\ & = c^\oplus(h_1).
    \end{align*}
    One could similarly check that $b^\ominus_{ts} \ge c^\ominus({h_2})$. Therefore,
    \[
    c^\oplus({h_1})>0 \implies b^\oplus_{ts}>0 \implies b^\ominus_{ts}=0 \implies c^\ominus({h_2})=0.
    \]
    It follows that either $c^\oplus(h_1)=0$, or $c^\ominus(h_2)=0$, or both.
\end{proof}

\begin{corollary}
\label{corc}
For every $h\in X^Y$, either $c^\oplus(h)=0$, or $c^\ominus(h)=0$, or both.
\end{corollary}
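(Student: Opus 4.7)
The plan is to observe that Corollary~\ref{corc} is an immediate specialization of Proposition~\ref{propc} to the diagonal case $h_1 = h_2$. Concretely, fix an arbitrary $h \in X^Y$ and apply Proposition~\ref{propc} with the choice $h_1 := h$ and $h_2 := h$. Since $Y$ is nonempty (it has $m$ elements with $m \ge 1$), we may pick any index $t \in \underline{m}$, for instance $t = 1$, and the equality $h_1(y_t) = h(y_t) = h_2(y_t)$ holds trivially by reflexivity. Thus the hypothesis of Proposition~\ref{propc} is satisfied, and the conclusion yields that either $c^\oplus(h) = 0$, or $c^\ominus(h) = 0$, or both, which is exactly the statement of the corollary.

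There is essentially no obstacle: the result is a one-line corollary obtained by instantiating the previous proposition at $h_1 = h_2 = h$. The only very mild subtlety is checking that the index $t$ exists, which is guaranteed by the standing assumption that $Y$ is a nonempty finite set of cardinality $m$. No further computation, no additional averaging or equivariance argument, and no appeal to the transitivity assumption is needed at this stage, since Proposition~\ref{propc} has already absorbed those ingredients.
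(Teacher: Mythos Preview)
Your proof is correct and follows exactly the paper's approach: the paper's proof reads simply ``Set $h_1=h_2$ in Proposition~\ref{propc}.'' Your version is slightly more verbose in justifying the existence of the index $t$, but the argument is identical.
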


\begin{proof}
    Set $h_1=h_2$ in Proposition~\ref{propc}.
\end{proof}

We finally have the prerequisites to define the generalized $T$-permutant measures $\mu^\oplus$ and $\mu^\ominus$ associated with $F^\oplus$ and $F^\ominus$. Each of them is the average of the functions $c^\oplus$ and $c^\ominus$ along the orbit $\mathcal{O}(h)$ of $h$ under the action $\alpha_T$. Formally, we define
\begin{align*}
    \mu^\oplus(h) & := \sum_{f \in \mathcal{O}(h)} \frac{c^\oplus(f)}{\left|\mathcal{O}(f)\right|} = \sum_{f \in \mathcal{O}(h)} \frac{c^\oplus(f)}{\left|\mathcal{O}(h)\right|}, \\
    \mu^\ominus(h) & := \sum_{f \in \mathcal{O}(h)} \frac{c^\ominus(f)}{\left|\mathcal{O}(f)\right|} = \sum_{f \in \mathcal{O}(h)} \frac{c^\ominus(f)}{\left|\mathcal{O}(h)\right|}.
\end{align*}

\begin{proposition}
\label{prop:gpm}
    $\mu^\oplus$ and $\mu^\ominus$ are generalized $T$-permutant measures. As a consequence, the function $\mu = \mu^\oplus - \mu^\ominus$ is also a generalized $T$-permutant measure.
\end{proposition}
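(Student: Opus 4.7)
The plan is to show that the invariance condition $\mu^\oplus(\{h\}) = \mu^\oplus(\{g h T(g^{-1})\})$ for all $g \in G$ follows almost immediately from the fact that, by construction, $\mu^\oplus$ is \emph{constant on orbits} of the action $\alpha_T$, and the same for $\mu^\ominus$; then $\mu = \mu^\oplus - \mu^\ominus$ inherits the property by linearity.

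First, I would unpack the defining formula. For every $h \in X^Y$, the value
\[
\mu^\oplus(h) \;=\; \sum_{f \in \mathcal{O}(h)} \frac{c^\oplus(f)}{|\mathcal{O}(h)|}
\]
depends on $h$ only through the orbit $\mathcal{O}(h)$: both the index set of summation and the denominator are orbit invariants, while the summand $c^\oplus(f)$ does not depend on $h$ at all. Hence $h \mapsto \mu^\oplus(h)$ factors through the quotient $X^Y / \alpha_T$.

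Next, I would observe that for any $g \in G$ the element $h' := g h T(g^{-1})$ is, by the very definition of $\alpha_T$, in the same $G$-orbit as $h$, so $\mathcal{O}(h') = \mathcal{O}(h)$. Combining with the previous step yields $\mu^\oplus(h) = \mu^\oplus(h')$, which is exactly the generalized $T$-permutant condition on singletons. Extending additively to the power set of $X^Y$ (legitimate since $X^Y$ is finite and $c^\oplus \ge 0$ makes $\mu^\oplus$ a nonnegative measure) gives $\mu^\oplus(H) = \mu^\oplus(g H T(g^{-1}))$ for every $H \subseteq X^Y$. An identical argument, replacing $c^\oplus$ with $c^\ominus$, establishes the claim for $\mu^\ominus$.

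For the final assertion, I would note that the collection of signed measures on the power set of $X^Y$ satisfying the invariance $\nu(\{h\}) = \nu(\{ghT(g^{-1})\})$ is closed under real linear combinations, since the condition is linear in $\nu$. Therefore $\mu = \mu^\oplus - \mu^\ominus$ is a finite signed measure enjoying the same invariance, hence a generalized $T$-permutant measure. I do not anticipate any real obstacle here: the entire content of the proposition is built into the averaging construction, whose sole purpose is exactly to force orbit-constancy out of the (in general non-unique) weights $c^\oplus$ and $c^\ominus$ produced by Proposition~\ref{prop:sum}.
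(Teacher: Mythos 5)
Your proposal is correct and follows essentially the same route as the paper, which simply asserts that the invariance $\mu^\oplus(H)=\mu^\oplus(gHT(g)^{-1})$ follows immediately from the definition; you have merely spelled out the underlying reason (orbit-constancy of the averaged weights plus linearity of the invariance condition). No gaps.
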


\begin{proof}
    The definition of $\mu^\oplus$ immediately implies that $\mu^\oplus(H) = \mu^\oplus\left(gHT(g)^{-1}\right)$ for every $g \in G$ and every subset $H$ of $X^Y$. In other words, $\mu^\oplus$ is a non-negative generalized $T$-permutant measure. Quite analogously, we can prove that $\mu^\ominus$ is a non-negative generalized $T$-permutant measure. Hence, we can conclude that the function $\mu := \mu^\oplus - \mu^\ominus$ is a generalized $T$-permutant measure.
\end{proof}

Now that we have defined the two generalized $T$-permutant measures
$\mu^\oplus$ and $\mu^\ominus$ associated with the $T$-equivariant linear maps
$F^\oplus$ and $F^\ominus$, we can begin the steps needed to show that suitable
weighted sums of these measures reproduce precisely the two maps. 
Let $G_h:=\{g \in G \mid \alpha_T(g,h)=h\}$ be the stabilizer subgroup of $G$ with respect to $h$, i.e., the subgroup of $G$ containing the elements that fix $h$ by the action. We recall that acting on $h$ by every element of $G$ we obtain each element of the orbit $\mathcal{O}(h)$ exactly $\left|G_h\right|$ times, and the well known relation $\left|G_h\right|\left|\mathcal{O}(h)\right| = \left|G\right|$ (cf. \cite{aschbacher_2000}). We observe that, for $f, h \in X^Y$, if $f \in \mathcal{O}(h)$ then $G_f$ is isomorphic to $G_h$. Finally we are able to prove Proposition~\ref{carperm+-}, which states something very similar to Theorem~\ref{maint} but regarding $F^\oplus$ and $F^\ominus$.

\begin{proposition} 
    \label{carperm+-}
    For any $\varphi \in \R^X$ we have that
    \begin{align*}
        F^\oplus(\varphi) & = \sum_{h \in X^Y} \varphi h \ \mu^\oplus(h), \\
        F^\ominus(\varphi) & = \sum_{h \in X^Y} \varphi h \ \mu^\ominus(h).
    \end{align*}
Therefore,
 \begin{align*}
        B^\oplus & = \sum_{h \in X^Y} \mu^\oplus(h) R(h), \\
        B^\ominus & = \sum_{h \in X^Y} \mu^\ominus(h)R(h).
    \end{align*}
\end{proposition}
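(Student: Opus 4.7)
The plan is to start from the representation $F^\oplus(\varphi)=\sum_{h\in X^Y} c^\oplus(h)\,\varphi h$ already supplied by Proposition~\ref{prop:sum}, and to exploit the $T$-equivariance of $F^\oplus$ to average the (in general non-invariant) coefficients $c^\oplus(h)$ over the $\alpha_T$-orbit of $h$. The key observation is that the resulting average is exactly the value $\mu^\oplus(h)$ defined just before the statement, so the averaged identity is precisely the desired formula. The argument for $F^\ominus$ is identical; the matrix identities follow at once by evaluating on the basis $\mathcal{B}_X$.

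Concretely, I first fix $g\in G$ and evaluate Proposition~\ref{prop:sum} at $\varphi g$; $T$-equivariance of $F^\oplus$ on the left-hand side gives
\[
F^\oplus(\varphi)\,T(g)\;=\;F^\oplus(\varphi g)\;=\;\sum_{h\in X^Y} c^\oplus(h)\,\varphi g h.
\]
Composing on the right with $T(g^{-1})$ and relabelling via the bijection $h\mapsto h':=g h T(g^{-1})=\alpha_T(g,h)$ of $X^Y$ onto itself (whose inverse is $h'\mapsto\alpha_T(g^{-1},h')$), I obtain
\[
F^\oplus(\varphi)\;=\;\sum_{h\in X^Y} c^\oplus\bigl(\alpha_T(g^{-1},h)\bigr)\,\varphi h.
\]
Since this identity is valid for every $g\in G$, averaging over $G$ yields
\[
F^\oplus(\varphi)\;=\;\sum_{h\in X^Y}\varphi h\cdot\frac{1}{|G|}\sum_{g\in G} c^\oplus\bigl(\alpha_T(g^{-1},h)\bigr).
\]

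To conclude, I identify the inner average with $\mu^\oplus(h)$. Since $g\mapsto g^{-1}$ is a bijection of $G$, the multiset $\{\alpha_T(g^{-1},h):g\in G\}$ coincides with the multiset $\{\alpha_T(g,h):g\in G\}$, which by the orbit-stabilizer theorem visits each $f\in\mathcal{O}(h)$ exactly $|G_h|$ times. Using $|G_h|\,|\mathcal{O}(h)|=|G|$, the inner average collapses to $\sum_{f\in\mathcal{O}(h)} c^\oplus(f)/|\mathcal{O}(h)|$, which is the definition of $\mu^\oplus(h)$. For the matrix version, I evaluate the operator identity on each basis function $\mathds{1}_{x_j}$ and use Remark~\ref{grhughaepj} to read $[\mathds{1}_{x_j} h]=R(h)[\mathds{1}_{x_j}]$; reading off columns gives $B^\oplus=\sum_h \mu^\oplus(h)\,R(h)$, and likewise for $B^\ominus$.

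The substantive content is the first step—using equivariance to produce $|G|$ different but equivalent expressions for $F^\oplus(\varphi)$—together with the orbit-stabilizer count in the last step. Neither is deep, but the direction of the reindexing (passing through $g^{-1}$ rather than $g$) has to be chosen correctly so that the average is taken over the orbit of $h$ itself, rather than over an unrelated translate; this is really the only bookkeeping pitfall in the proof.
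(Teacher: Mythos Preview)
Your proof is correct and follows essentially the same strategy as the paper: both use $T$-equivariance of $F^\oplus$ to produce $|G|$ equivalent expressions, average over $G$, and invoke orbit--stabilizer to collapse the resulting coefficient to $\mu^\oplus(h)$. The only cosmetic difference is that the paper carries out the averaging at the matrix level (conjugating $B^\oplus$ by $P(T(g))$ and $P(g)^{-1}$ and using Remark~\ref{grhughaepj}) before passing back to operators, whereas you work directly with operators throughout and reindex via $h\mapsto\alpha_T(g,h)$, which is slightly cleaner.
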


\begin{proof}
    Recalling that $\mathcal{R}_g \colon \varphi \mapsto \varphi g$ and $\mathcal{R}_k \colon \psi \mapsto \psi k $ are linear maps for any $g \in G$ and $k \in K$, the $T$-equivariance condition $F^\oplus \mathcal{R}_{g^{-1}} = \mathcal{R}_{T(g^{-1})}F^\oplus$ directly implies that $B^\oplus P(g^{-1}) = P\left(T(g^{-1})\right)B^\oplus$. In particular, we have that $P\left(T(g)\right)B^\oplus P(g)^{-1} = B^\oplus$ for every $g \in G$. Note that $P(g)$ is an $n\times n$ matrix, while $P(T(g))$ is an $m\times m$ matrix. From Proposition~\ref{prop:sum} it follows that
    \begin{align*}
        B^\oplus & = \overbrace{\frac{1}{\left|G\right|}B^\oplus + \dots + \frac{1}{\left|G\right|}B^\oplus}^{\left|G\right| \text{ summands}} \\
        & = \frac{1}{\left|G\right|}\sum_{g \in G}P(T(g))B^\oplus P(g)^{-1} \\
        & = \frac{1}{\left|G\right|}\sum_{g \in G}P(T(g))\left(\sum_{f \in X^Y} c^\oplus(f)R(f)\right)P(g)^{-1} \\
        & = \sum_{f \in X^Y}\sum_{g \in G}\frac{c^\oplus(f)}{\left|G\right|} P(T(g))R(f)P(g)^{-1} \\
        & = \sum_{f \in X^Y}\frac{c^\oplus(f)}{\left|G\right|}\sum_{g \in G} R\left(g^{-1}fT(g)\right) \tag{$\star$} \\
        & = \sum_{f \in X^Y}\frac{c^\oplus(f)}{\left|G\right|}\sum_{g \in G} R\left(gfT(g)^{-1}\right),
    \end{align*}
    where $(\star)$ follows from Remark~\ref{grhughaepj}, recalling that $P(g)=R(g)$ and $P(T(g))=R(T(g))$. Therefore, we have that
    \begin{align}
        [F^\oplus(\varphi)] & = B^\oplus[\p] \nonumber\\
        & = \sum_{f \in X^Y} \frac{c^\oplus(f)}{\left|G\right|} \sum_{g \in G} R\left(gfT(g)^{-1}\right)[\varphi] \nonumber \\
        & = \left[\sum_{f \in X^Y} \frac{c^\oplus(f)}{\left|G\right|} \sum_{g \in G} \mathcal{R}_{gfT(g)^{-1}}(\varphi)\right] \nonumber\\
        & = \left[\sum_{f \in X^Y} \frac{c^\oplus(f)}{\left|G\right|} \sum_{g \in G} \varphi g f T(g)^{-1}\right],\nonumber
    \end{align}
    i.e., 
    \begin{align}
    \label{eq:conj}
        F^\oplus(\varphi) & = \sum_{f \in X^Y} \frac{c^\oplus(f)}{\left|G\right|} \sum_{g \in G} \varphi g f T(g)^{-1}.
    \end{align}
    Let us now set $\delta(f_1, f_2) = 1$ if $f_1$ and $f_2$ belong to the same orbit under the action of $G$, and $\delta(f_1, f_2) = 0$ otherwise. Moreover, we recall that, by acting on $h$ with every element of $G$, each element of the orbit $\mathcal{O}(h)$ is obtained exactly $|G_h| = \frac{|G|}{|\mathcal{O}(h)|}$ times, and that $f \in \mathcal{O}(h) \iff h \in \mathcal{O}(f)$. Therefore, Equation~(\ref{eq:conj}) implies that
    \begin{align*}
        F^\oplus(\varphi) & = \sum_{f \in X^Y} \frac{c^\oplus(f)}{\left|G\right|} \left|G_f\right| \sum_{h \in \mathcal{O}(f)} \varphi h \\
        & = \sum_{f \in X^Y} \frac{c^\oplus(f)}{\left|G\right|} \left|G_f\right| \sum_{h \in X^Y} \delta(h, f)\varphi h \\
        & = \sum_{h \in X^Y} \left( \sum_{f \in X^Y} \frac{c^\oplus(f)}{\left|G\right|} \left|G_f\right| \delta(h, f) \right) \varphi h \\
        & = \sum_{h \in X^Y} \left( \sum_{f \in X^Y} \frac{c^\oplus(f)}{\left|\mathcal{O}(f)\right|} \delta(h, f) \right) \varphi h \\
        & = \sum_{h \in X^Y} \left( \sum_{f \in \mathcal{O}(h)} \frac{c^\oplus(f)}{\left|\mathcal{O}(f)\right|} \right) \varphi h \\
        & = \sum_{h \in X^Y} \varphi h \ \mu^\oplus(h).
    \end{align*}
    The equality $B^\oplus = \sum_{h \in X^Y} \mu^\oplus(h) R(h)$ follows by observing that 
    $[F^\oplus(\p)]=B^\oplus [\p]$ and
    $\left[\varphi h \ \mu^\oplus(h)\right]=\mu^\oplus(h) R(h)[\p]$ for every $h\in X^Y$ and $\p\in \R^X$. The statements concerning $F^\ominus$ and $B^\ominus$ are proved analogously.
\end{proof}

We can now prove Theorem~\ref{maint}.
\begin{proof}[Proof of Theorem~\ref{maint}]
   Proposition~\ref{prop:GEOperm} already provides one verse of the theorem. From Proposition~\ref{prop:gpm} and Proposition~\ref{carperm+-}, it follows that $\mu$ is a generalized $T$-permutant measure such that $F(\varphi) = \sum_{h \in X^Y} \varphi h \ \mu(h)$ for every $\varphi \in \mathbb{R}^X$.
\end{proof}




\begin{rem}\label{measurenotunique}
Because of Remark~\ref{gammanotunique}, the measures $\mu^\oplus$, $\mu^\ominus$ and $\mu$ associated with $F$ in the proof of Theorem~\ref{maint} are not unique in general.
\end{rem}

We wish to emphasize here that the statement of our representation theorem for
linear GEOs (Theorem~\ref{maint}) requires that $T(G)$ acts transitively on $Y$. Before extending this result to include non-expansivity, we make an additional remark on the relationship between $\mu^\oplus$ and $\mu^\ominus$.

\begin{proposition}
    \label{prop:singular}
    The measures $\mu^\oplus$ and $\mu^\ominus$ associated with $F$ in the proof of Theorem~\ref{maint} are mutually singular.
\end{proposition}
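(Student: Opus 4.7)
The plan is to show that, once translated into the finite discrete setting, mutual singularity of $\mu^\oplus$ and $\mu^\ominus$ reduces to the pointwise statement that for every $h\in X^Y$ either $\mu^\oplus(h)=0$ or $\mu^\ominus(h)=0$. Since both measures are finite, non-negative (being averages over orbits of the non-negative functions $c^\oplus$ and $c^\ominus$), and defined on the power set of the finite set $X^Y$, a Hahn-type decomposition will then follow by simply setting $A:=\{h\in X^Y\mid \mu^\oplus(h)>0\}$ and $B:=X^Y\setminus A$, which partition $X^Y$ into a piece carrying $\mu^\oplus$ and a piece carrying $\mu^\ominus$.

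To establish the pointwise disjointness I would argue by contradiction: assume some $h\in X^Y$ satisfies $\mu^\oplus(h)>0$ and $\mu^\ominus(h)>0$. Fix an arbitrary $y_t\in Y$ and set $x_s:=h(y_t)$, so that the rectangular permutation matrix $R(h)=(r_{ij}(h))$ has $r_{ts}(h)=1$. Using the decompositions furnished by Proposition~\ref{carperm+-},
\begin{equation*}
B^\oplus=\sum_{f\in X^Y}\mu^\oplus(f)R(f),\qquad B^\ominus=\sum_{f\in X^Y}\mu^\ominus(f)R(f),
\end{equation*}
and the non-negativity of every coefficient and every entry $r_{ij}(f)$, I can read off the $(t,s)$-entries to conclude
\begin{equation*}
b^\oplus_{ts}\ \ge\ \mu^\oplus(h)\,r_{ts}(h)\ =\ \mu^\oplus(h)\ >\ 0,\qquad b^\ominus_{ts}\ \ge\ \mu^\ominus(h)\ >\ 0.
\end{equation*}

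This contradicts property (3) of the splitting $B=B^\oplus-B^\ominus$ listed right before Proposition~\ref{prop:sum}, which forbids $b^\oplus_{ts}$ and $b^\ominus_{ts}$ from both being non-zero. Hence no such $h$ exists, and the partition $(A,B)$ above witnesses mutual singularity. I do not expect any serious obstacle here: the only point requiring a moment of care is the non-negativity of $\mu^\oplus$ and $\mu^\ominus$, which is immediate from their definition as orbit averages of the non-negative numbers $c^\oplus(f), c^\ominus(f)$ coming out of the decomposition in Proposition~\ref{prop:sum}; everything else is an entrywise comparison in the matrix identities of Proposition~\ref{carperm+-}.
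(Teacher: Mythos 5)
Your proof is correct and follows essentially the same strategy as the paper's: argue by contradiction and compare the entries of $B^\oplus$ and $B^\ominus$ at a position where $R(h)$ has a $1$, contradicting the disjointness of the supports of $B^\oplus$ and $B^\ominus$ (property (3) of the splitting). Your version is in fact slightly more streamlined, since you lower-bound both $b^\oplus_{ts}$ and $b^\ominus_{ts}$ directly from the $\mu$-decompositions of Proposition~\ref{carperm+-}, whereas the paper first passes to an orbit representative $f$ with $c^\oplus(f)>0$ and invokes Proposition~\ref{prop:sum} for the positive part before using Proposition~\ref{carperm+-} for the negative part.
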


\begin{proof}
For every $h \in X^{Y}$, we denote by $r_{ij}(h)$ the entry in position $(i,j)$ of the matrix $R(h)$. Suppose, for a contradiction, that $\mu^\oplus$ and $\mu^\ominus$ are not mutually singular, i.e., there exists a function $\bar h \in X^{Y}$ such that $\mu^{\oplus}(\bar h) > 0$ and $\mu^{\ominus}(\bar h) > 0$. Then, by the definition of $\mu^{\oplus}$ and the inequality $\mu^{\oplus}(\bar h) > 0$, there exists $f \in \mathcal{O}(\bar h)$ such that $c^{\oplus}(f) > 0$. Since $R(f)$ is a rectangular permutation matrix, for every $i \in \underline{m}$ there exists exactly one index $j_i \in \underline{n}$ such that $r_{i j_i}(f) = 1$. Hence, by Proposition~\ref{prop:sum}, it holds that
\[
b^{\oplus}_{i j_i}
= \sum_{h \in X^{Y}} c^{\oplus}(h)\, r_{i j_i}(h)
\ge c^{\oplus}(f)\, r_{i j_i}(f)
= c^{\oplus}(f) > 0.
\]
We recall that the positivity of $b^\oplus_{ij}$ implies that $b^\ominus_{ij} = 0$ for any $(i, j)\in \underline{m}\times\underline{n}$, by definition of $B^\oplus$ and $B^\ominus$. Therefore, $b^{\ominus}_{i j_i} = 0$.
On the other hand, by Proposition~\ref{carperm+-} we have
\[
b^{\ominus}_{i j_i}
= \sum_{h \in X^{Y}} r_{i j_i}(h)\ \mu^{\ominus}(h)
\ge r_{i j_i}(f)\ \mu^{\ominus}(f)
= \mu^{\ominus}(f).
\]
Since $\mu^\ominus$ is a generalized $T$-permutant measure (Proposition~\ref{prop:gpm}), it follows that $\mu^{\ominus}(f) = \mu^{\ominus}(\bar h)$. As a consequence, $b^{\ominus}_{i j_i} \ge \mu^{\ominus}(\bar h) > 0$, which contradicts the equality $b^{\ominus}_{i j_i} = 0$.
\end{proof}

Proposition~\ref{prop:singular} ensures that $\mu^\oplus$ and $\mu^\ominus$ are the Hahn-Jordan decomposition of $\mu$.

\section{Linear GENEOs and $T$-permutant measures}
\label{Linear}
\sectionmark{Building linear GENEOs}

We now extend Theorem~\ref{maint}
to include non-expansivity. To this end, one
final step is required, in the form of Lemma~\ref{lemma:ne}. This result
allows us to connect the generalized $T$-permutant measure $\mu$ with the norms of $F$
and $\p$ under the same hypotheses as in Theorem~\ref{maint}.

\begin{lemma}
    \label{lemma:ne}
    Assume that $G \subseteq \mathrm{Aut}(X), K \subseteq \mathrm{Aut}(Y)$, $T \colon G \to K$ is a homomorphism such that $T(G)$ transitively acts on $Y$, and $F$ is a $T$-equivariant map from $\mathbb{R}^X$ to $\mathbb{R}^Y$.  
    Let $\mu$ be the measure associated with $F$ in the proof of Theorem~\ref{maint}.
    Then, it holds that 
    \[
    \sum_{h \in X^Y} \lvert \mu (h) \rvert = \max_{\varphi \in \R^X \setminus \{\mathbf{0}\}}\frac{\left\| F(\varphi)\right\|_\infty}{\left\| \varphi\right\|_\infty}.
    \]
\end{lemma}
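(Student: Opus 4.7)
The plan is to relate both sides to the operator norm $\|F\|_{\infty\to\infty}$ of $F$ viewed as a linear map between $(\R^X,\|\cdot\|_\infty)$ and $(\R^Y,\|\cdot\|_\infty)$. Recall that for such a map the induced operator norm—which is exactly the right-hand side of the claimed identity—coincides with the maximum absolute row sum of the associated matrix $B=[F]$, namely $\max_{i\in\underline{m}}\sum_{j=1}^n |b_{ij}|$. So the goal reduces to showing $\sum_{h\in X^Y}|\mu(h)|=\max_i\sum_j|b_{ij}|$.

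First I would exploit the sign decompositions already available. By construction $B=B^\oplus-B^\ominus$ with $b^\oplus_{ij}b^\ominus_{ij}=0$ (property~3 in Section~\ref{Representation}), so $|b_{ij}|=b^\oplus_{ij}+b^\ominus_{ij}$ and the maximum row sum of $|B|$ equals the sum of the (uniform) row sums of $B^\oplus$ and $B^\ominus$. Property~\ref{property5} in the same section tells us that every row of $B^\oplus$ is a permutation of the tuple $\beta^\oplus$, and similarly for $B^\ominus$, hence
\[
\max_{i}\sum_{j=1}^n |b_{ij}|
= \|\beta^\oplus\|_1 + \|\beta^\ominus\|_1.
\]
On the measure side, Proposition~\ref{prop:singular} shows that $\mu^\oplus$ and $\mu^\ominus$ are mutually singular, which gives $|\mu(h)|=\mu^\oplus(h)+\mu^\ominus(h)$ pointwise, and therefore $\sum_h|\mu(h)|=\sum_h\mu^\oplus(h)+\sum_h\mu^\ominus(h)$.

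The bridge between the two sides comes from Proposition~\ref{carperm+-}: $B^\oplus=\sum_{h\in X^Y}\mu^\oplus(h)R(h)$. Since $R(h)\in\mathcal{RP}_{m\times n}$ has exactly one $1$ in each row, the sum of entries in any fixed row $i$ of $R(h)$ equals $1$. Summing the entries of row $i$ of $B^\oplus$ therefore yields
\[
\|\beta^\oplus\|_1 \;=\; \sum_{j=1}^n b^\oplus_{ij} \;=\; \sum_{h\in X^Y}\mu^\oplus(h)\sum_{j=1}^n r_{ij}(h) \;=\; \sum_{h\in X^Y}\mu^\oplus(h),
\]
and the same argument gives $\|\beta^\ominus\|_1=\sum_h\mu^\ominus(h)$. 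Combining the four displayed equalities closes the chain.

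I do not expect a serious obstacle here, since most of the work has been front-loaded into Propositions~\ref{prop:sum}, \ref{carperm+-}, and \ref{prop:singular}; the only subtlety to watch is that the identity $|\mu(h)|=\mu^\oplus(h)+\mu^\ominus(h)$ genuinely requires mutual singularity (without it only $\le$ holds), which is exactly what Proposition~\ref{prop:singular} provides. A minor bookkeeping point is the degenerate case in which $F^\oplus$ or $F^\ominus$ vanishes: then the corresponding $\beta$ and measure are zero and the argument above still goes through trivially.
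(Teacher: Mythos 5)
Your proof is correct, and it rests on the same underlying mechanism as the paper's — reducing both sides of the identity to the common row sum of $|B|$ via the decompositions $B=B^\oplus-B^\ominus$ and $\mu=\mu^\oplus-\mu^\ominus$ with disjoint supports — but the bookkeeping is organized differently, and your version is genuinely shorter. The paper does not invoke the standard fact that the $\ell^\infty\!\to\!\ell^\infty$ operator norm equals the maximum absolute row sum; it re-derives it in this setting by exhibiting the explicit maximizer $\overline{\varphi}=\sum_{j}\mathrm{sgn}(b_{1j})\mathds{1}_{x_j}$ and separately proving the upper bound $\|F(\varphi)\|_\infty\le\|\varphi\|_\infty\sum_{h}|\mu(h)|$ from the representation given by Theorem~\ref{maint}. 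More substantively, the paper routes the measure-side computation through the coefficients $c^\oplus,c^\ominus$ of Proposition~\ref{prop:sum}: it uses Corollary~\ref{corc} to get $|c(h)|=c^\oplus(h)+c^\ominus(h)$, an orbit-averaging identity to show $\sum_{h}|c(h)|=\sum_{h}|\mu(h)|$, and the evaluation of $F^\oplus,F^\ominus$ at the constant function $\mathds{1}_X$ to identify $\sum_{h}c^\oplus(h)$ with the row sum of $B^\oplus$. You bypass the $c$'s entirely by reading the row sums directly off $B^\oplus=\sum_{h}\mu^\oplus(h)R(h)$ from Proposition~\ref{carperm+-}, each $R(h)$ having unit row sums, and you obtain $|\mu(h)|=\mu^\oplus(h)+\mu^\ominus(h)$ from the mutual singularity of Proposition~\ref{prop:singular} rather than from Corollary~\ref{corc}. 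Both routes are sound; yours trades the paper's self-contained computation for two already-proved propositions plus a standard linear-algebra fact, which buys brevity at no cost in rigor. Your closing caveats — that the pointwise additivity of $|\mu|$ genuinely needs mutual singularity, and that the degenerate case of a vanishing $F^\oplus$ or $F^\ominus$ is harmless — are exactly the right ones to flag.
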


\begin{proof}
The statement is trivially true if $F$ is the zero operator, since in this case
$\mu$ coincides with the zero measure on $X^Y$, because
Proposition~\ref{prop:sum} shows that $c^\oplus(h)$ and $c^\ominus(h)$ vanish for
every $h \in X^Y$. Hence, we may assume that $F$ is not the zero operator and
that $B$ is not the zero matrix.
Setting $c:= c^\oplus - c^\ominus$, Corollary~\ref{corc} implies that $\lvert c(h) \rvert = c^\oplus(h) + c^\ominus(h)$ for every $h \in X^Y$. By definition of $\mu^\oplus$ and $\mu^\ominus$, we have that, for any $h \in X^Y$:
    \begin{align*}
    \sum_{f \in \mathcal{O}(h)} \mu^\oplus(f) & = \sum_{f \in \mathcal{O}(h)} c^\oplus(f)
    , \\
    \sum_{f \in \mathcal{O}(h)} \mu^\ominus(f) & = \sum_{f \in \mathcal{O}(h)} c^\ominus(f).
    \end{align*}
    It follows that, for each $h \in X^Y$,
    \begin{align*}
    \sum_{f \in \mathcal{O}(h)} \left| \mu(f) \right| & = \sum_{f \in \mathcal{O}(h)} \mu^\oplus(f) + \sum_{f \in \mathcal{O}(h)} \mu^\ominus(f) \\
    & = \sum_{f \in \mathcal{O}(h)} c^\oplus(f) + \sum_{f \in \mathcal{O}(h)} c^\ominus(f) \\
    & = \sum_{f \in \mathcal{O}(h)} \left| c(f) \right|,
    \end{align*}
    and hence,
    \begin{equation}
    \label{eq:negeq}
    \sum_{h \in X^Y} \left| c(h) \right| = \sum_{h \in X^Y} \left| \mu(h) \right|.
    \end{equation}
    Let us set $\mathds{1}_X := \sum_{r=1}^n \mathds{1}_{x_r}$ and $\mathds{1}_Y := \sum_{s=1}^m \mathds{1}_{y_s}$. 
    By observing that 
    $R(h) [\mathds{1}_X]=[\mathds{1}_Y]$ for every $h\in X^Y$, from Proposition~\ref{prop:sum} we obtain that 
    \begin{align*}
[F^\oplus\left(\mathds{1}_X\right)]&=\sum_{h \in X^Y} c^\oplus(h) R(h) [\mathds{1}_X] = \left(\sum_{h \in X^Y}c^\oplus(h)\right) [\mathds{1}_Y],
    \\ [F^\ominus\left(\mathds{1}_X\right)]&=\sum_{h \in X^Y} c^\ominus(h) R(h) [\mathds{1}_X] = \left(\sum_{h \in X^Y}c^\ominus(h)\right) [\mathds{1}_Y].
    \end{align*}
    Since $F^\oplus$ (resp. $F^\ominus$) is a $T$-equivariant linear map, any row of $B^\oplus$ (resp. $B^\ominus$) is a permutation of the first row (Lemma~\ref{lempermutationrow}). Then, we get
    \begin{align*}
    [F^\oplus(\mathds{1}_X)] & = B^\oplus [\mathds{1}_X] = \left( \sum_{j=1}^n b^\oplus_{1j}\right) [\mathds{1}_Y]
    , \\
    [F^\ominus(\mathds{1}_X)] & = B^\ominus [\mathds{1}_X] = \left( \sum_{j=1}^n b^\ominus_{1j}\right) [\mathds{1}_Y].
    \end{align*}
    It follows that
    \begin{align*}
    \sum_{h\in X^Y}c^\oplus(h) & = \sum_{j=1}^n b^\oplus_{1j}
    , \\
    \sum_{h\in X^Y}c^\ominus(h) & = \sum_{j=1}^n b^\ominus_{1j}.
    \end{align*}
Therefore,
    \begin{align}
    \label{eq:neeq}
    \sum_{h \in X^Y} \left| c(h) \right| & = \sum_{h \in X^Y} c^\oplus(h) + \sum _{h \in X^Y} c^\ominus(h) \nonumber \\
    & = \sum_{j=1}^n b^\oplus_{1j} + \sum_{j=1}^n b^\ominus_{1j} \nonumber \\
    & = \sum_{j=1}^n \left| b_{1j} \right|.
    \end{align}
{\color{black} Moreover, recalling once again that any row of $B$ is a permutation of the first
row of $B$,} we obtain that 
    \[
    \sum_{j=1}^n \left| b_{1j} \right| = \left| \sum_{j=1}^n b_{1j} \mathrm{sgn}(b_{1j}) \right| = \max_{i\in\underline{m}} \left| \sum_{j=1}^n b_{ij} \mathrm{sgn}(b_{1j}) \right|.
    \]
    Let us now consider $\overline{\varphi} = \sum^n_{j=1}\mathrm{sgn}(b_{1j})\mathds{1}_{x_j} \in \R^X$. Since \(B\) is not the zero matrix, it follows that $\bar \p \neq\mathbf{0}$ and $\lVert \overline{\varphi} \rVert_\infty = 1$. Moreover, we have
    \begin{align*}
        [F (\overline{\varphi})] & = B [\overline{\varphi}] \\
        & = B \sum^n_{j=1}\mathrm{sgn}(b_{1j})[\mathds{1}_{x_j}] \\
        & = \sum^n_{j=1}\mathrm{sgn}(b_{1j})B[\mathds{1}_{x_j}] \\
        & = \sum^n_{j=1}\mathrm{sgn}(b_{1j})\sum^m_{i=1}b_{ij}[\mathds{1}_{y_i}] \\
        & = \sum^m_{i=1}\left(\sum^n_{j=1}\mathrm{sgn}(b_{1j})b_{ij}\right)[\mathds{1}_{y_i}].
    \end{align*} 
    Hence, from Equations (\ref{eq:negeq}) and (\ref{eq:neeq}) we have 
    \begin{align*}
    \left\| F (\overline{\varphi}) \right\|_\infty & = \left\| \sum^m_{i=1}\left(\sum^n_{j=1}\mathrm{sgn}(b_{1j})b_{ij}\right)\mathds{1}_{y_i} \right\|_\infty \\
    & = \left\|\left(\sum^n_{j=1}\left| b_{1j} \right| \right)\mathds{1}_{Y}\right\|_\infty\\
    & = \sum^n_{j=1}\left| b_{1j} \right| \\
    & = \sum_{h \in X^Y} \left| c(h) \right| = \sum_{h \in X^Y} \left| \mu(h) \right|.
    \end{align*}
It follows that 
$\frac{ \left\| F (\overline{\varphi}) \right\|_\infty}{\left\| \overline{\varphi} \right\|_\infty}= \left\| F (\overline{\varphi}) \right\|_\infty = \sum_{h \in X^Y} \left| \mu(h) \right|$. In particular,
    \[
    \max_{\varphi\in\R^X\setminus\{\mathbf{0}\}}\frac{\left\| F(\varphi)\right\|_\infty}{\left\|\varphi\right\|_\infty} \ge \frac{ \left\| F (\overline{\varphi}) \right\|_\infty}{\left\| \overline{\varphi} \right\|_\infty}= \sum_{h \in X^Y} \left| \mu(h) \right|.
    \]
    Moreover, from Theorem~\ref{maint} we have that, for every $\varphi\in\R^X, F(\varphi)=\sum_{h\in X^Y}\varphi h\ {\mu}(h)$. Hence,
    \begin{align*}
        \left\| F(\varphi) \right\|_\infty & \le \sum_{h\in X^Y}\left\|\varphi h\right\|_\infty \left|{\mu}(h)\right| \\
        & \le \left\|\varphi\right\|_\infty\sum_{h\in X^Y} \left|{\mu}(h)\right|.
    \end{align*}
    Therefore, $\frac{\left\| F(\varphi)\right\|_\infty}{\left\|\varphi\right\|_\infty}\le\sum_{h\in X^Y}{\left|\mu(h)\right|}$ for every $\varphi\in\R^X\setminus\{\mathbf{0}\}$. In particular, the same applies for the maximum over $\p \in \R^X \setminus\{\mathbf{0}\}$. In conclusion, 
    \[
    \sum_{h\in X^Y}{|\mu(h)|}=\max_{\varphi\in\R^X\setminus\{\mathbf{0}\}}\frac{\left\| F(\varphi)\right\|_\infty}{\left\|\varphi\right\|_\infty}.
    \]
\end{proof}

Finally, we are able to state and prove the main result of this section, namely the representation theorem for linear GENEOs.


\begin{theorem}
    \label{teo:linear}
    Assume that $G \subseteq \mathrm{Aut}(X), K \subseteq \mathrm{Aut}(Y)$, $T \colon G \to K$ is a group homomorphism, $T(G) \subseteq K$ transitively acts on the finite set $Y$ and $F$ is a map from $\mathbb{R}^X$ to $\mathbb{R}^Y$. The map $(F,T)$ is a linear GENEO from $(\mathbb{R}^X, G)$ to $(\mathbb{R}^Y, K)$ if and only if a generalized $T$-permutant measure $\mu$ exists such that $F(\varphi) = \sum_{h \in X^Y} \varphi h \ \mu(h)$ for every $\varphi \in \mathbb{R}^X$ and $\sum_{h \in X^Y} \left| \mu(h) \right| \le 1$.
\end{theorem}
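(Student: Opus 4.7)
The plan is to derive Theorem~\ref{teo:linear} as an essentially immediate consequence of Theorem~\ref{maint} together with Lemma~\ref{lemma:ne}, since the linear-GEO part of the statement has already been fully handled and only the non-expansivity equivalence is new. I will split the argument into the two implications and show that non-expansivity corresponds exactly to the condition $\sum_{h\in X^Y}|\mu(h)|\le 1$.

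First I would address the forward direction. Assuming $(F,T)$ is a linear GENEO, the operator $F$ is in particular a $T$-equivariant linear map, so Theorem~\ref{maint} applies and yields a generalized $T$-permutant measure $\mu$ with $F(\varphi)=\sum_{h\in X^Y}\varphi h\ \mu(h)$ for every $\varphi\in\R^X$. Non-expansivity of $F$ means that $\|F(\varphi)\|_\infty\le \|\varphi\|_\infty$ for every $\varphi$, hence the operator norm $\max_{\varphi\in\R^X\setminus\{\mathbf{0}\}}\|F(\varphi)\|_\infty/\|\varphi\|_\infty$ is bounded by $1$. Applying Lemma~\ref{lemma:ne} to the measure $\mu$ associated with $F$ in the proof of Theorem~\ref{maint}, this operator norm equals $\sum_{h\in X^Y}|\mu(h)|$, and so we obtain $\sum_{h\in X^Y}|\mu(h)|\le 1$, as required.

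For the converse direction, suppose there exists a generalized $T$-permutant measure $\mu$ with $F(\varphi)=\sum_{h\in X^Y}\varphi h\ \mu(h)$ and $\sum_{h\in X^Y}|\mu(h)|\le 1$. Proposition~\ref{prop:GEOperm} immediately gives that $(F,T)$ is a linear GEO, so the only remaining task is to verify non-expansivity. This is a routine triangle-inequality computation: by linearity, for any $\varphi_1,\varphi_2\in\R^X$,
\[
\|F(\varphi_1)-F(\varphi_2)\|_\infty
=\Bigl\|\sum_{h\in X^Y}(\varphi_1-\varphi_2)h\ \mu(h)\Bigr\|_\infty
\le \sum_{h\in X^Y}\|(\varphi_1-\varphi_2)h\|_\infty\,|\mu(h)|
\le \|\varphi_1-\varphi_2\|_\infty,
\]
where in the last step we use that precomposition with $h\colon Y\to X$ cannot increase the uniform norm (since $\|\psi h\|_\infty\le\|\psi\|_\infty$ for every $\psi\in\R^X$) and that $\sum_{h}|\mu(h)|\le 1$.

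I do not expect any real obstacle here, since both directions reduce to invoking results already proved in the paper; the heavy lifting sits in Theorem~\ref{maint} and in Lemma~\ref{lemma:ne}. The only point worth emphasizing in the write-up is that in the forward direction one must use \emph{the particular} measure $\mu$ constructed in the proof of Theorem~\ref{maint} (the one to which Lemma~\ref{lemma:ne} refers), since by Remark~\ref{measurenotunique} the measure representing $F$ is not unique in general; however, once such a $\mu$ is produced the conclusion $\sum_h|\mu(h)|\le 1$ holds and the statement of the theorem is satisfied.
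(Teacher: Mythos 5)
Your proposal is correct and follows essentially the same route as the paper: the forward direction invokes Theorem~\ref{maint} and then Lemma~\ref{lemma:ne} applied to the specific measure constructed there, and the converse combines Proposition~\ref{prop:GEOperm} with the triangle-inequality bound using $\sum_{h}|\mu(h)|\le 1$. Your explicit remark about needing the particular measure from the proof of Theorem~\ref{maint} (in view of Remark~\ref{measurenotunique}) is a point the paper leaves implicit, but otherwise the arguments coincide.
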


\begin{proof}
    Let us assume that $(F,T)$ is a GENEO from $\left( \R^X, G \right)$ to $\left( \R^Y, K \right)$ with respect to $T$. Then, Theorem~\ref{maint} guarantees that a generalized $T$-permutant measure $\mu$ exists such that $F(\p) = \sum_{h \in X^Y} \p h \mu(h)$ for every $\p \in \R^X$. Moreover, Lemma~\ref{lemma:ne} guarantees that $\sum_{h \in X^Y} \left| \mu(h) \right| = \max_{\varphi \in \R^X \setminus \{\mathbf{0}\}} \frac{\left\| F(\p) \right\|_\infty}{\left\| \p \right\|_\infty}$. Since $F$ is non-expansive, $\sum_{h \in X^Y} \left| \mu(h) \right| \le 1$, which is the first implication of the statement. Let us now assume that a generalized $T$-permutant measure $\mu$ exists such that $F(\p) = \sum_{h \in X^Y} \p h \mu(h)$ for every $\p \in \R^X$, with $\sum_{h \in X^Y} \left| \mu(h) \right| \le 1$. Then, Proposition~\ref{prop:GEOperm} guarantees that $F$ is linear and $T$-equivariant. Moreover, we can prove the non-expansivity of $F$:
    \begin{align*}
        \left\| F(\p) \right\|_\infty & = \left\| \sum_{h \in X^Y} \p h \ \mu(h) \right\|_\infty \\
        & \le \sum_{h \in X^Y} \left\| \p h \right\|_\infty \left| \mu(h) \right| \\
        & \le \left\| \p \right\|_\infty \left( \sum_{h \in X^Y} \left| \mu(h) \right| \right) \\
        & \le \left\| \p \right\|_\infty.
    \end{align*}
\end{proof}

Theorem~\ref{teo:linear} represents every linear GENEO between different
perception pairs by means of generalized $T$-permutant measures. As we will show in
the next section, generalized permutant measures are easier to define than
GENEOs; hence, this theorem provides a valuable tool for the construction of
GENEOs. The ability to populate the space of GENEOs with operators that can be
explicitly constructed is fundamental for obtaining a wide range of
applications. For this reason, Theorem~\ref{teo:linear} is not only important
from a theoretical point of view, but also crucial for the applicability of
GENEO theory.

\begin{rem}
    \label{rem:noinj}
    The generalized $T$-permutant measure of Theorem~\ref{teo:linear} is not unique. As an example, let us consider $X = Y = \left\{ 1, 2, 3 \right\}, G = K = \Aut(X)$ and the identity homomorphism $T = \mathrm{id}_{\Aut(X)}$. Let $F \colon \R^X \to \R^X$ be the linear application that maps $\mathds{1}_j$ to $\frac{1}{3}\sum_{i \in X} \mathds{1}_i$ for any $j \in X$. It is easy to check that $F$ is a GENEO with associated matrix
    \[ B = 
    \begin{pmatrix}
        1/3 & 1/3 & 1/3 \\
        1/3 & 1/3 & 1/3 \\
        1/3 & 1/3 & 1/3
    \end{pmatrix}.
    \]
    One could represent $B$ as:
    \[ B = 
    \begin{pmatrix}
        1/3 & 0 & 0 \\
        0 & 1/3 & 0 \\
        0 & 0 & 1/3
    \end{pmatrix} + \begin{pmatrix}
        0 & 1/3 & 0 \\
        0 & 0 & 1/3 \\
        1/3 & 0 & 0
    \end{pmatrix} +     \begin{pmatrix}
        0 & 0 & 1/3 \\
        1/3 & 0 & 0 \\
        0 & 1/3 & 0
    \end{pmatrix} =: \frac{1}{3}R_1 + \frac{1}{3}R_2 + \frac{1}{3}R_3,
    \] and
    \[ B = 
    \begin{pmatrix}
        0 & 0 & 1/3 \\
        0 & 1/3 & 0 \\
        1/3 & 0 & 0
    \end{pmatrix} + \begin{pmatrix}
        0 & 1/3 & 0 \\
        1/3 & 0 & 0 \\
        0 & 0 & 1/3
    \end{pmatrix} +     \begin{pmatrix}
        1/3 & 0 & 0 \\
        0 & 0 & 1/3 \\
        0 & 1/3 & 0
    \end{pmatrix} =: \frac{1}{3}R_4 + \frac{1}{3}R_5 + \frac{1}{3}R_6.
    \]
    One could easily check that $\left\{R_1 \right\}, \left\{R_2, R_3 \right\}$ and $\left\{R_4, R_5, R_6 \right\}$ are three orbits of $X^X$ under the action of $\alpha_T$. We can define these two different generalized $T$-permutant measures:
    \[
    \mu(h) := \begin{cases}
        1/3 & \text{if } P(h) = R_1, R_2, R_3 \\
        0 & \text{otherwise}
    \end{cases},
    \]
    \[
    \nu(h) := \begin{cases}
        1/3 & \text{if } P(h) = R_4, R_5, R_6 \\
        0 & \text{otherwise}
    \end{cases}.
    \]
    Both generalized permutant measures define the same linear GENEO $(F,\mathrm{id}_{\Aut(X)})$.
\end{rem}

\section{Compactness and convexity of the space of linear GENEOs}
\label{Compactness}

{\color{black} This section aims to establish several results concerning the geometry and
topology of the space of linear GENEOs, in particular compactness and
convexity.

In this section, we fix a homomorphism \(T \colon G \to K\) and let \(\mathcal{F}^T_{\mathrm{lin}}\) denote the space of all linear maps \(F \colon \mathbb{R}^X \to \mathbb{R}^Y\) such that $(F, T) \colon (\R^X, G) \to (\R^Y, K)$ is a GENEO. We denote by \(\Gamma\) the set of all generalized \(T\)-permutant measures and observe that \(\Gamma\) is a real vector space. 
We emphasize that, since $X^Y$ is finite, the action $\alpha_T$
admits only finitely many orbits, which we denote by
$\mathcal{O}_1, \dots, \mathcal{O}_k \subseteq X^Y$.
For each $i \in \underline{k}$, we define the signed measure
\[
\nu_i(h) :=
\begin{cases}
1 & \text{if } h \in \mathcal{O}_i, \\
0 & \text{otherwise}.
\end{cases}
\]
We observe that, for every $i\in \underline{k}$, the measure $\nu_i$ 
is a generalized $T$-permutant measure, since it is constant on
orbits. 
Moreover, the measures $\nu_1,\ldots,\nu_k$ are linearly independent.
We also observe that, for every $\p \in \mathbb{R}^X$,
$\|F_{\nu_i}(\p)\|_\infty
=
\left\|
\sum_{h \in X^Y} \p h \ \nu_i(h)
\right\|_\infty
=
\left\|
\sum_{h \in \mathcal{O}_i} \p h
\right\|_\infty
\le
\sum_{h \in \mathcal{O}_i}
\|\p h\|_\infty
\le
|\mathcal{O}_i|\,\|\p\|_\infty$.


\begin{proposition}
\label{prop:convexity}
The signed measures $\nu_1, \ldots, \nu_k$ form a basis for $\Gamma$.
\end{proposition}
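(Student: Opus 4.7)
The plan is to prove both spanning and linear independence, with spanning being the substantive part. The key observation is that an element of $\Gamma$ is entirely determined by one real number per orbit: the defining invariance $\mu(\{h\}) = \mu(\{ghT(g^{-1})\})$ for every $g \in G$ says exactly that $\mu$ is constant on each orbit $\mathcal{O}_i$ of the action $\alpha_T$ on $X^Y$. This is the bridge between the abstract definition of $\Gamma$ and the finite-dimensional picture suggested by the list $\nu_1,\ldots,\nu_k$.

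First I would take an arbitrary $\mu \in \Gamma$ and show that it is constant on each orbit. For each $i \in \underline{k}$, pick any representative $h_i \in \mathcal{O}_i$ and set $c_i := \mu(\{h_i\})$. By the invariance property, $c_i$ does not depend on the choice of representative. Then for every $h \in X^Y$ I would compute $\mu(\{h\})$ directly: if $h \in \mathcal{O}_i$ then $\mu(\{h\}) = c_i = c_i \nu_i(h)$, while $\nu_j(h) = 0$ for $j \neq i$. Hence
\[
\mu(\{h\}) = \sum_{i=1}^{k} c_i \nu_i(h),
\]
for every $h \in X^Y$, which, since signed measures on the power set of the finite set $X^Y$ are determined by their values on singletons, gives $\mu = \sum_{i=1}^{k} c_i \nu_i$. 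This establishes that $\nu_1,\ldots,\nu_k$ span $\Gamma$.

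For linear independence, I would argue as already noted before the proposition: suppose $\sum_{i=1}^{k} \lambda_i \nu_i = 0$. Fix an index $j$ and any $h \in \mathcal{O}_j$. Evaluating at $\{h\}$ gives $\lambda_j \nu_j(h) = \lambda_j = 0$, since the orbits are pairwise disjoint and $\nu_i$ vanishes outside $\mathcal{O}_i$. As $j$ was arbitrary, all $\lambda_i = 0$, so the $\nu_i$ are linearly independent. Combined with the spanning argument, this shows they form a basis of $\Gamma$, which in particular yields $\dim \Gamma = k$. There is no real obstacle here; the only thing to be careful about is stating cleanly that measures on a finite power set are determined by their values on singletons, so that passing from the pointwise identity to the identity of signed measures is justified without further work.
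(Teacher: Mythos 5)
Your proposal is correct and follows essentially the same route as the paper: both arguments rest on the observation that the invariance condition forces $\mu$ to be constant on each orbit $\mathcal{O}_i$, from which the decomposition $\mu=\sum_i c_i\nu_i$ and the linear independence (by evaluating at a representative of each orbit) follow immediately. Your write-up is merely more explicit than the paper's, which relegates the linear independence of the $\nu_i$ to a remark preceding the proposition.
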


\begin{proof}
On the one hand, it is clear that each measure of the form $\mu=\sum_{i=1}^k a_i \ \nu_i$
is a $T$-permutant measure on $X^Y$.
On the other hand, if $\mu$ is a generalized $T$-permutant measure on $X^Y$, then for every
index $i\in\underline{k}$ there exists a value $a_i$ such that
$\mu(f)=a_i$ for every $f\in\mathcal O_i$.
Since $\nu_i$ is constant on $\mathcal O_i$, it follows that $\mu = \sum_{i=1}^k a_i \ \nu_i$.
\end{proof}

\begin{corollary}
\label{cor:convexity}
The linear GEOs from $(\R^X,G)$ to $(\R^Y,K)$ are exactly the pairs of maps  
$(\sum_{i=1}^k a_i F_{\nu_i},T)$, with $a_1,\ldots,a_k$ arbitrary real numbers. The linear GENEOs from $(\R^X,G)$ to $(\R^Y,K)$ are exactly the pairs of maps $(\sum_{i=1}^k a_i F_{\nu_i},T)$, with  $\sum_{i=1}^k |a_i||\mathcal{O}_i|\le 1$.
\end{corollary}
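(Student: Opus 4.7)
The plan is to obtain Corollary~\ref{cor:convexity} as an immediate unpacking of the representation theorems (Theorem~\ref{maint} and Theorem~\ref{teo:linear}) against the basis $\{\nu_1,\dots,\nu_k\}$ of $\Gamma$ provided by Proposition~\ref{prop:convexity}. The elementary fact driving the whole argument is the linearity of the map $\mu \mapsto F_\mu$, which is transparent from the defining formula $F_\mu(\varphi) = \sum_{h \in X^Y} \varphi h \, \mu(h)$: summing over $h$ commutes with forming linear combinations in $\mu$.

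For the statement on linear GEOs I would argue both inclusions. If $(F,T)$ is a linear GEO then, by Theorem~\ref{maint} (whose transitivity hypothesis is the standing assumption of this section), there is a generalized $T$-permutant measure $\mu$ with $F = F_\mu$. Proposition~\ref{prop:convexity} supplies coefficients $a_1,\dots,a_k$ such that $\mu = \sum_{i=1}^k a_i \nu_i$, and linearity of $\mu \mapsto F_\mu$ gives $F = \sum_{i=1}^k a_i F_{\nu_i}$. Conversely, for arbitrary $a_1,\dots,a_k \in \R$, the combination $\sum_{i=1}^k a_i \nu_i$ is again a generalized $T$-permutant measure (Proposition~\ref{prop:convexity}), so Proposition~\ref{prop:GEOperm} ensures that $(\sum_{i=1}^k a_i F_{\nu_i}, T) = (F_{\sum_i a_i \nu_i}, T)$ is a linear GEO.

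For the statement on linear GENEOs I would follow the same pattern but substitute Theorem~\ref{teo:linear} for Theorem~\ref{maint}. The key computation is the identification of the total variation bound: if $\mu = \sum_{i=1}^k a_i \nu_i$, then since the orbits $\mathcal{O}_1,\dots,\mathcal{O}_k$ partition $X^Y$ and $\nu_i$ is the indicator of $\mathcal{O}_i$, one has $\mu(h) = a_i$ for every $h \in \mathcal{O}_i$, and hence
\[
\sum_{h \in X^Y} |\mu(h)| \;=\; \sum_{i=1}^k \sum_{h \in \mathcal{O}_i} |a_i| \;=\; \sum_{i=1}^k |a_i|\,|\mathcal{O}_i|.
\]
Consequently, the condition $\sum_h |\mu(h)| \le 1$ appearing in Theorem~\ref{teo:linear} is exactly $\sum_{i=1}^k |a_i||\mathcal{O}_i| \le 1$, and both directions of Theorem~\ref{teo:linear} translate verbatim into the claimed description.

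I do not expect any genuine obstacle; the proof is essentially a change of presentation between measures and coefficients in the basis $\{\nu_i\}$. The only point of caution is that the parametrization $(a_1,\dots,a_k) \mapsto \sum_{i=1}^k a_i F_{\nu_i}$ need not be injective at the operator level (cf.\ Remark~\ref{measurenotunique} and Remark~\ref{rem:noinj}), but since the corollary asserts only set equalities this non-injectivity is harmless.
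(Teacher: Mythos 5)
Your proposal is correct and follows essentially the same route as the paper's proof: both directions are read off from Theorems~\ref{maint} and~\ref{teo:linear} via the basis $\{\nu_1,\dots,\nu_k\}$ of Proposition~\ref{prop:convexity} and the linearity of $\mu\mapsto F_\mu$, and your computation $\sum_{h\in X^Y}|\mu(h)|=\sum_{i=1}^k|a_i|\,|\mathcal{O}_i|$ is exactly the one the paper carries out.
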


\begin{proof}
Theorem~\ref{maint}  guarantees that the linear GEOs from
$(\mathbb{R}^X, G)$ to $(\mathbb{R}^Y, K)$ are precisely the pairs
$(F_\mu, T)$, where $\mu$ is a generalized $T$-permutant measure on $X^Y$.
The first statement of Corollary~\ref{cor:convexity} follows immediately from
this result and Proposition~\ref{prop:convexity}, by observing that if
$\mu = \sum_{i=1}^k a_i \ \nu_i$, then
$F_\mu = \sum_{i=1}^k a_i F_{\nu_i}$.
Moreover, Theorem~\ref{teo:linear} guarantees that the linear GENEOs from
$(\mathbb{R}^X, G)$ to $(\mathbb{R}^Y, K)$ are exactly the pairs
$(F_\mu, T)$ for which $\mu$ is a $T$-permutant measure on $X^Y$
satisfying $\sum_{h \in X^Y} |\mu(h)| \le 1$. The second statement of Corollary~\ref{cor:convexity}
follows from Theorem~\ref{teo:linear} by observing that, if $\mu=\sum_{i=1}^k a_i \ \nu_i$, then 

\[
\begin{aligned}
\sum_{h\in X^Y} |\mu(h)|
&=
\sum_{h\in X^Y} \left|\sum_{j=1}^k a_j \ \nu_j(h)\right|\\
&=\sum_{i=1}^k \sum_{h\in \mathcal{O}_i} \left|\sum_{j=1}^k a_j \ \nu_j(h)\right|\\
&=\sum_{i=1}^k \sum_{h\in \mathcal{O}_i} \left|a_i \right|\\
&=\sum_{i=1}^k |a_i| |\mathcal{O}_i|.
\end{aligned}
\]
\end{proof}

We define on $\Gamma$ the  
distance $\sum_{h \in X^Y} |\mu_1(h)-\mu_2(h)|$.
Moreover, we endow $\mathcal{F}_\mathrm{lin}^T$ with the usual operator norm defined as

\[
\left\| F \right\|_\mathrm{op} := \sup\left\{ \left\| F(\p) \right\|_\infty, \p \in \R^X, \left\| \p \right\|_\infty = 1 \right\}.
\]
We recall that, given a generalized $T$-permutant measure $\mu$ such that $\sum_{h\in X^Y}|\mu(h)|\le 1$, 
we can define an associated GENEO $(F_\mu,T)$ by setting 
$F_\mu(\p)=\sum_{h \in X^Y} \varphi h \ \mu(h)$.
\begin{theorem}\label{thmcomp&conv}
    Given a homomorphism $T \colon G \to K$, the space $\mathcal{F}_\mathrm{lin}^T$ is compact and coincides with the convex hull of the set $\left\{\frac{1}{|\mathcal{O}_1|}F_{\nu_1},\ldots, 
    \frac{1}{|\mathcal{O}_k|}F_{\nu_k},
    -\frac{1}{|\mathcal{O}_1|}F_{\nu_1},\ldots, 
    -\frac{1}{|\mathcal{O}_k|}F_{\nu_k}\right\}$.
\end{theorem}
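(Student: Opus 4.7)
The plan is to reduce the theorem to a statement about the weighted $\ell^1$-ball in the finite-dimensional coefficient space $\R^k$, using Corollary~\ref{cor:convexity} as the main lever, and then recognize this set as the convex hull of the rescaled generators $\pm\frac{1}{|\mathcal{O}_i|}F_{\nu_i}$. Throughout, I write $G_i^+:=\frac{1}{|\mathcal{O}_i|}F_{\nu_i}$ and $G_i^-:=-G_i^+$ for brevity, and work in the finite-dimensional normed space of linear maps $\R^X\to\R^Y$.

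First, I would translate Corollary~\ref{cor:convexity} into the $G_i^\pm$ coordinates: a linear map $F$ belongs to $\mathcal{F}_\mathrm{lin}^T$ if and only if there exist real numbers $a_1,\dots,a_k$ with $\sum_{i=1}^k|a_i||\mathcal{O}_i|\le 1$ such that $F=\sum_{i=1}^k a_i F_{\nu_i}$. Substituting $b_i:=a_i|\mathcal{O}_i|$ turns this into
\[
\mathcal{F}_\mathrm{lin}^T=\Bigl\{\sum_{i=1}^k b_i G_i^+ \;\Big|\; b_1,\dots,b_k\in\R,\ \sum_{i=1}^k|b_i|\le 1\Bigr\}.
\]

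Next, I would prove the convex-hull identity by showing both inclusions. If $F=\sum_i\lambda_i^+ G_i^+ + \sum_i\lambda_i^- G_i^-$ is a convex combination, i.e.\ $\lambda_i^\pm\ge 0$ and $\sum_i(\lambda_i^++\lambda_i^-)=1$, then rewriting $F=\sum_i(\lambda_i^+-\lambda_i^-)G_i^+$ and observing $\sum_i|\lambda_i^+-\lambda_i^-|\le\sum_i(\lambda_i^++\lambda_i^-)=1$ places $F$ in $\mathcal{F}_\mathrm{lin}^T$ by the characterization above. For the reverse direction, given $F=\sum_i b_i G_i^+$ with $s:=\sum_i|b_i|\le 1$, decompose $b_i=b_i^+-b_i^-$ with $b_i^+,b_i^-\ge 0$ and $b_i^+b_i^-=0$, so that $\sum_i(b_i^++b_i^-)=s$. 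If $s=1$ the coefficients $(b_i^\pm)$ already form a convex combination. If $s<1$, I would absorb the slack by replacing $b_1^+$ and $b_1^-$ with $b_1^++\tfrac{1-s}{2}$ and $b_1^-+\tfrac{1-s}{2}$, respectively; the difference $b_1^+-b_1^-$ is unchanged (so $F$ is unchanged), while the weights now sum to $1$, exhibiting $F$ as an element of $\mathrm{conv}\{G_1^+,\dots,G_k^+,G_1^-,\dots,G_k^-\}$.

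Finally, compactness is immediate from the convex-hull identity: the standard simplex
\[
\Delta_{2k-1}=\Bigl\{(\lambda_1^+,\dots,\lambda_k^+,\lambda_1^-,\dots,\lambda_k^-)\in\R^{2k}\;\Big|\;\lambda_i^\pm\ge 0,\ \sum_{i=1}^k(\lambda_i^++\lambda_i^-)=1\Bigr\}
\]
is compact, and $\mathcal{F}_\mathrm{lin}^T$ is its image under the continuous linear map $(\lambda_i^\pm)\mapsto\sum_i(\lambda_i^+-\lambda_i^-)G_i^+$ into a finite-dimensional normed space; continuous images of compact sets are compact.

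The only minor subtlety is the slack-absorption trick in the case $s<1$, which requires at least one orbit to be available; this is guaranteed because $X^Y$ is nonempty in our setting, so $k\ge 1$. I expect the bookkeeping with the $\pm$ signs to be the main thing to write carefully, but no deeper obstacle: once Corollary~\ref{cor:convexity} is in hand, the argument is essentially the standard identification of the $\ell^1$-ball in $\R^k$ with the convex hull of the $2k$ signed unit vectors.
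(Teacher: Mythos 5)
Your proposal is correct. For the convex-hull identity you follow essentially the same route as the paper: both arguments rest on Corollary~\ref{cor:convexity} and on the identification of the weighted $\ell^1$-ball $\{\sum_i a_i v_i : \sum_i |a_i|\,|\mathcal{O}_i| \le 1\}$ with $\mathrm{conv}(\pm\frac{1}{|\mathcal{O}_i|}F_{\nu_i})$; the only difference is that the paper invokes this as a standard fact about real vector spaces, whereas you prove both inclusions by hand (the sign-splitting $b_i = b_i^+ - b_i^-$ plus the slack-absorption trick), which is a reasonable amount of extra bookkeeping and is carried out correctly. Where you genuinely diverge is compactness: the paper establishes it independently of the convex-hull statement, by showing that the map $f\colon \mu \mapsto F_\mu$ is non-expansive from $(\Gamma, \sum_h|\mu_1(h)-\mu_2(h)|)$ to the operator norm and that $\mathcal{F}^T_{\mathrm{lin}} = f(\Gamma')$ with $\Gamma'$ the unit ball of $\Gamma$, then uses continuity plus compactness of $\Gamma'$. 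You instead derive compactness as a corollary of the convex-hull identity, writing $\mathcal{F}^T_{\mathrm{lin}}$ as the image of the compact simplex $\Delta_{2k-1}$ under a continuous linear map. Your route is more economical, since the convex hull of finitely many points in a finite-dimensional normed space is automatically compact; the paper's route has the advantage of not depending on the finite generation of $\Gamma$ and of exhibiting the explicit $1$-Lipschitz parametrization $\mu \mapsto F_\mu$, which is of independent interest. Both arguments inherit the transitivity hypothesis on the action of $T(G)$ on $Y$ through Corollary~\ref{cor:convexity} (equivalently, Theorems~\ref{maint} and~\ref{teo:linear}), so no additional assumption is hidden in your version.
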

\begin{proof}
Let us define the linear map $f$
that takes each $\mu\in\Gamma$ to the operator $F_\mu$.

Since
$$\begin{aligned}
\left\|F_{\mu_1}(\varphi)-F_{\mu_2}(\varphi)\right\|_\infty
&=\left\|\sum_{h\in X^Y} \varphi h\ \mu_1(h)-\sum_{h\in X^Y} \varphi h\ \mu_2(h)\right\|_\infty \\
&=\left\|\sum_{h\in X^Y} \varphi h \ (\mu_1(h)-\mu_2(h))\right\|_\infty \\
&\le \sum_{h\in X^Y} \|\varphi h\|_\infty|\mu_1(h)-\mu_2(h)| \\
&\le \sum_{h\in X^Y} \|\varphi \|_\infty|\mu_1(h)-\mu_2(h)| \\
&= \|\varphi \|_\infty\sum_{h\in X^Y} |\mu_1(h)-\mu_2(h)|, 
\end{aligned}
$$
it follows 
that
$\left\| F_{\mu_1}-F_{\mu_2} \right\|_\mathrm{op}\le \sum_{h\in X^Y} |\mu_1(h)-\mu_2(h)|$.
This implies that the function $f$ is non-expansive, and hence continuous. Let us now set
$\Gamma' := \left\{ \mu \in \Gamma : \sum_{h \in X^Y} |\mu(h)| \le 1 \right\}$, and observe that $\mathcal{F}^T_{\mathrm{lin}} = f(\Gamma')$ (Theorem~\ref{teo:linear}).
Since 
$\Gamma'$ is compact, and $f$
is continuous, it follows that $\mathcal{F}^T_{\mathrm{lin}}$ is compact as well.

Corollary~\ref{cor:convexity} states that 
the linear GENEOs from $(\R^X,G)$ to $(\R^Y,K)$ are exactly the pairs of maps  
$(\sum_{i=1}^k a_i F_{\nu_i},T)$, with  $\sum_{i=1}^k |a_i|\ |\mathcal{O}_i|\le 1$. We recall that, in a real vector space, $\left\{ \sum_{i=1}^k \lambda_iv_i \mid \sum_{i=1}^k |\lambda_i|\le 1 \right\}=\mathrm{conv}
(v_1,\ldots,v_k,-v_1,\ldots,-v_k)$, where $\mathrm{conv}(S)$ is the convex hull of $S$.
Therefore, the linear GENEOs from $(\R^X,G)$ to $(\R^Y,K)$ are exactly the pairs
$(F,T)$ where $F$ belongs to the convex hull of the set
    $\left\{\frac{1}{|\mathcal{O}_1|}F_{\nu_1},\ldots, 
    \frac{1}{|\mathcal{O}_k|}F_{\nu_k},
    -\frac{1}{|\mathcal{O}_1|}F_{\nu_1},\ldots, 
    -\frac{1}{|\mathcal{O}_k|}F_{\nu_k}\right\}$.
\end{proof}


\begin{rem}
It is worth pointing out an alternative proof of the compactness of the space $\mathcal{F}^T_\mathrm{lin}$ in Theorem~\ref{thmcomp&conv}.
Let us consider the compact sets
$\Phi = \{\varphi \in \mathbb{R}^X : \|\varphi\|_\infty = 1\}$ and $\Psi = \{\psi \in \mathbb{R}^Y : \|\psi\|_\infty \le 1\}$.
If $F \in \mathcal{F}^T_{\mathrm{lin}}$, we observe that
$F|_{\Phi}(\Phi) \subseteq \Psi$ (since $F$ is non-expansive). Moreover, we recall the metric
$D_{\mathrm{GENEO}}(F_1|_{\Phi}, F_2|_{\Phi})
:=
\max_{\varphi \in \Phi}
\|F_1|_{\Phi}(\varphi) - F_2|_{\Phi}(\varphi)\|_\infty$ on the set $\mathcal{F}=\{F|_{\Phi}:F\in \mathcal{F}^T_\mathrm{lin}\}$
(cf., e.g., \cite{bergomi2019towards}).
Of course,
$\|F_1 - F_2\|_{\mathrm{op}}
=
D_{\mathrm{GENEO}}(F_1|_{\Phi}, F_2|_{\Phi})$.
This means that the map from $(\mathcal{F}^T_\mathrm{lin},\|\cdot\|_{\mathrm{op}})$
to $(\mathcal{F},D_{\mathrm{GENEO}})$ that sends $F$ to $F|_{\Phi}$
is an isometry.
We already know that the space of GENEOs from
$(\Phi, G)$ to $(\Psi, K)$ is compact with respect to 
$D_{\mathrm{GENEO}}$ (see, e.g., \cite{bergomi2019towards}).
Therefore, the subset $\mathcal{F}$ of this space, being closed, is compact as well. Since 
$\mathcal{F}^T_\mathrm{lin}$ is isometric to 
$\mathcal{F}$, it is compact.
\end{rem}

\begin{rem}
    We stress that, in general, the set
\[
\left\{ \tfrac{1}{|\mathcal{O}_1|}F_{\nu_1}, \dots, \tfrac{1}{|\mathcal{O}_k|}F_{\nu_k},
-\tfrac{1}{|\mathcal{O}_1|}F_{\nu_1}, \dots, -\tfrac{1}{|\mathcal{O}_k|}F_{\nu_k} \right\}
\]
does not coincide with the set of vertices of the polytope $\mathcal{F}^T_{\mathrm{lin}}$.
Recall that a point $p$ of a polytope $P$ is called a vertex of $P$ if the equality
$p = \lambda p_1 + (1-\lambda)p_2$,
$\lambda \in ]0,1[,\; p_1,p_2 \in P$,
implies $p = p_1 = p_2$.
Let us now consider the setting of Remark~\ref{rem:noinj}, and the three orbits
\[
\mathcal{O}_1 := \{R_1\}, \qquad
\mathcal{O}_2 := \{R_2, R_3\}, \qquad
\mathcal{O}_3 := \{R_4, R_5, R_6\}
\]
in $X^X$, under the action $\alpha_T$.
Following the construction illustrated at the beginning of this section, we
define the corresponding measures $\nu_1, \nu_2, \nu_3$.
Accordingly, for $i \in \{1,2,3\}$, we have
$F_{\nu_i}(\varphi)
= \sum_{h \in X^Y} \varphi h\, \nu_i(h)
= \sum_{h \in \mathcal{O}_i} \varphi h$.
Since $[F_{\nu_i}(\varphi)]
= \sum_{h \in \mathcal{O}_i} R(h)[\varphi]$,
it is easy to verify that
$[F_{\nu_1}] =
\begin{pmatrix}
1 & 0 & 0 \\
0 & 1 & 0 \\
0 & 0 & 1
\end{pmatrix}$, 
$[F_{\nu_2}] =
\begin{pmatrix}
0 & 1 & 1 \\
1 & 0 & 1 \\
1 & 1 & 0
\end{pmatrix}$, 
and
$[F_{\nu_3}] =
\begin{pmatrix}
1 & 1 & 1 \\
1 & 1 & 1 \\
1 & 1 & 1
\end{pmatrix}$, 
It follows that
$\left[\frac{1}{|\mathcal{O}_1|}F_{\nu_1}\right] =
\begin{pmatrix}
1 & 0 & 0 \\
0 & 1 & 0 \\
0 & 0 & 1
\end{pmatrix}$, 
$\left[\frac{1}{|\mathcal{O}_2|}F_{\nu_2}\right] =
\begin{pmatrix}
0 & \tfrac{1}{2} & \tfrac{1}{2} \\
\tfrac{1}{2} & 0 & \tfrac{1}{2} \\
\tfrac{1}{2} & \tfrac{1}{2} & 0
\end{pmatrix}$, 
and
$\left[\frac{1}{|\mathcal{O}_3|}F_{\nu_3}\right] =
\begin{pmatrix}
\tfrac{1}{3} & \tfrac{1}{3} & \tfrac{1}{3} \\
\tfrac{1}{3} & \tfrac{1}{3} & \tfrac{1}{3} \\
\tfrac{1}{3} & \tfrac{1}{3} & \tfrac{1}{3}
\end{pmatrix}$.
In particular,
$\left[\frac{1}{|\mathcal{O}_3|}F_{\nu_3}\right] = \frac{1}{3}\left[\frac{1}{|\mathcal{O}_1|}F_{\nu_1}\right] + \frac{2}{3}\left[\frac{1}{|\mathcal{O}_2|}F_{\nu_2}\right]$.
Therefore, $F_{\nu_3}$ is not a vertex of the polytope $\mathcal{F}^T_{\mathrm{lin}}$.
\end{rem}
}

\section{Application}
\label{Application}
The aim of this section is to show how introducing a layer of linear GENEOs before an autoencoder increases its robustness to noise. This approach builds on prior work that leverages topological information, and specifically persistent homology, within autoencoder architectures \cite{pmlr-v119-moor20a}. Our general framework is motivated by the fact that the computation of persistence diagrams can itself be formalized as a GENEO (see Remark~\ref{rem:PH}). Specifically, we demonstrate that training an autoencoder to reconstruct MNIST digits and then testing it on noisy versions of these digits yields better performance when a layer of linear GENEOs for dimensionality reduction is introduced before the autoencoding architecture. Random toroidal translations are applied to MNIST images to enlarge the dataset, enrich the experimental setting, and exploit the translation equivariance properties of the GENEO operators. To evaluate reconstruction performance, we use a CNN trained to recognize digits on clean MNIST and track how much its performance drops when it receives the noisy MNIST reconstruction provided by the various methods.

\subsection{Mathematical setup}
Let $X = \mathbb{Z}_p \times \mathbb{Z}_p$, where $p$ is prime, and let $Y = \mathbb{Z}_p$. Throughout this section, all operations are performed modulo $p$. Let $G$ be the group of translations of $X$, that is, $G = \{g_v(q) = q + v$ for $v \in X\}$. Let $K$ be the group of translations of $Y$, that is, $K = \{k_c(z) = z + c$ for $c \in Y\}$. We consider the following perception pairs: $(\R^X, G)$ and $(\R^Y, K)$. We define the following bilinear form $\cdot : X \times X \to \mathbb{Z}_p$ given by:
\[
u \cdot v := u_1v_1 + u_2v_2,
\]
where $u = (u_1, u_2)$ and $v = (v_1, v_2)$. It is easy to check that $\cdot$ is a symmetric bilinear form on $X$.


    
    
%
%

\begin{rem}
    There exist non-trivial vectors $u \in \mathbb{Z}_p \times \mathbb{Z}_p$ (i.e., $u \neq e_i$ for any standard basis vector $e_i$) such that $u \cdot u = 1$. For example, in $\mathbb{Z}_{11}$, the vector $(3, 5)$ satisfies this property, since $(3, 5) \cdot (3, 5) = 9 + 25 = 34 \equiv 1 \pmod{11}$.
\end{rem}

Let us now fix a vector $\bar{w} = (\bar{w}_1, \bar{w}_2) \in \mathbb{Z}_p \times \mathbb{Z}_p$ such that $\bar{w} \cdot \bar{w} = 1$. We will call $\bar w$ a \textit{unit vector}, and define the group homomorphism $T_{\bar w} \colon G \to K$ by:
\[
T_{\bar w}(g_v) := k_{v\cdot \bar w}.
\]
It is easy to check that $T_{\bar w}$ is a group homomorphism. We set $\bar{w}^\perp = (-\bar{w}_2, \bar{w}_1)$. The reader should note that $\bar{w}^\perp \cdot \bar w = 0$. For every $t \in \mathbb{Z}_p$, we define $h_{\bar{w}}^t \in X^Y$,
\begin{equation}
\label{eq:permutant}
    h_{\bar{w}}^t(z) := z\bar{w} + t\bar{w}^\perp.
\end{equation}

\begin{lemma}\label{lemma:multiple}
    For any $v \in X$, the vector $v - (v \cdot \bar{w})\bar{w}$ is a scalar multiple of $\bar{w}^\perp$.
\end{lemma}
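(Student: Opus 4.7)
The plan is to exploit the fact that, thanks to the assumption $\bar w\cdot\bar w=1$, the pair $\{\bar w,\bar w^\perp\}$ behaves like an orthonormal basis of $\mathbb{Z}_p\times\mathbb{Z}_p$ with respect to the bilinear form $\cdot$, and the desired identity is essentially the orthogonal-projection formula of Gram--Schmidt adapted to this finite setting.

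First I would record the three identities produced by the hypothesis: $\bar w\cdot\bar w=\bar w_1^2+\bar w_2^2=1$, $\bar w^\perp\cdot\bar w=-\bar w_2\bar w_1+\bar w_1\bar w_2=0$, and $\bar w^\perp\cdot\bar w^\perp=\bar w_2^2+\bar w_1^2=1$. From these it follows immediately that $\{\bar w,\bar w^\perp\}$ is linearly independent in $\mathbb{Z}_p\times\mathbb{Z}_p$, since any relation $\alpha\bar w+\beta\bar w^\perp=0$ yields $\alpha=0$ and $\beta=0$ upon pairing with $\bar w$ and $\bar w^\perp$ respectively. Hence $\{\bar w,\bar w^\perp\}$ is a basis of $X$.

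Then I would write an arbitrary $v\in X$ as $v=\alpha\bar w+\beta\bar w^\perp$, compute $v\cdot\bar w=\alpha(\bar w\cdot\bar w)+\beta(\bar w^\perp\cdot\bar w)=\alpha$ using orthogonality, and conclude
\[
v-(v\cdot\bar w)\bar w=\beta\,\bar w^\perp,
\]
which is the claimed scalar multiple of $\bar w^\perp$. One can moreover identify $\beta=v\cdot\bar w^\perp$ by pairing the decomposition with $\bar w^\perp$, a fact that will likely be useful in subsequent arguments about the map $h_{\bar w}^t$.

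A brute-force alternative is available for readers who prefer a direct verification: expand $v-(v\cdot\bar w)\bar w$ coordinate by coordinate, substitute $1-\bar w_1^2=\bar w_2^2$ and $1-\bar w_2^2=\bar w_1^2$, and factor out $(v_2\bar w_1-v_1\bar w_2)$ to obtain $(v_2\bar w_1-v_1\bar w_2)(\bar w_2,-\bar w_1)=-(v_2\bar w_1-v_1\bar w_2)\,\bar w^\perp$. There is no real obstacle here; the only care required is that all arithmetic is being carried out in $\mathbb{Z}_p$, but every manipulation used is a polynomial identity that holds over any commutative ring once $\bar w_1^2+\bar w_2^2=1$ is assumed.
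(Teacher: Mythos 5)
Your argument is correct, but it takes a different route from the paper's. The paper sets $u = v - (v\cdot\bar w)\bar w$, checks only that $u\cdot\bar w = 0$, and then observes that the homogeneous $2\times 2$ system with coefficient rows $u$ and $\bar w^\perp$ admits the nonzero solution $\bar w$, so its rows must be linearly dependent; this yields the conclusion without ever using $\bar w^\perp\cdot\bar w^\perp = 1$. You instead verify the full ``orthonormality'' relations, deduce that $\{\bar w, \bar w^\perp\}$ is a basis of $\mathbb{Z}_p\times\mathbb{Z}_p$ (valid since $p$ is prime, so you are working over a field), and read off the claim as the Gram--Schmidt projection identity $v-(v\cdot\bar w)\bar w=\beta\,\bar w^\perp$ with $\beta=v\cdot\bar w^\perp$. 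Your version costs one extra computation but buys an explicit formula for the scalar, which is genuinely useful: in the proof of Proposition~\ref{prop:permutant} the paper only asserts the existence of ``a suitable $t'$,'' whereas your decomposition identifies it as $t' = (v-(v\cdot\bar w)\bar w)\cdot\bar w^\perp = v\cdot\bar w^\perp$. One trivial slip in your brute-force alternative: factoring the coordinates gives $\bigl(v_2\bar w_1 - v_1\bar w_2\bigr)(-\bar w_2,\bar w_1) = \bigl(v_2\bar w_1 - v_1\bar w_2\bigr)\bar w^\perp$, not the negative of it; this matches the $\beta$ from your main argument and does not affect the statement, which only asks for some scalar multiple.
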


\begin{proof}
    Let $u = v - (v \cdot \bar{w})\bar{w}$. 
    First, we verify that $u \cdot \bar{w} = 0$:
    \begin{align*}
        u \cdot \bar{w} &= (v - (v \cdot \bar{w})\bar{w}) \cdot \bar{w} \\
        &= v \cdot \bar{w} - (v \cdot \bar{w})(\bar{w} \cdot \bar{w}) \\
        &= v \cdot \bar{w} - v \cdot \bar{w} = 0.
    \end{align*}
  We can thus observe that the following homogeneous square linear system in the unknowns $w_1$ and $w_2$ admits $\bar w$ as a nonzero solution:
    \[
    \begin{cases}
        u \cdot w = u_1w_1 + u_2w_2 = 0 \\
        \bar{w}^\perp \cdot w = -\bar{w}_2w_1 + \bar{w}_1w_2 = 0.
    \end{cases}
    \]
It follows directly from elementary linear algebra (cf. Theorem~7 in \cite{hoffmann1971linear}) that $u$ and $\bar{w}^\perp$ are linearly dependent.
\end{proof}

\begin{proposition}
\label{prop:permutant}
    The set $H_{\bar w} = \left\{ h_{\bar w}^t, t \in \Z_p\right\}$ is invariant with respect to the action $\alpha_{T_{\bar{w}}}$.
\end{proposition}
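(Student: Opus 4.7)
The plan is to unpack the definition of the action $\alpha_{T_{\bar w}}$ and apply it directly to an element $h_{\bar w}^t \in H_{\bar w}$, then show that the resulting function is again of the form $h_{\bar w}^{t'}$ for some $t' \in \Z_p$. By definition, $\alpha_{T_{\bar w}}(g_v, h_{\bar w}^t) = g_v \circ h_{\bar w}^t \circ T_{\bar w}(g_v^{-1})$. Since $T_{\bar w}(g_v^{-1}) = T_{\bar w}(g_{-v}) = k_{-v\cdot\bar w}$ sends $z$ to $z - v\cdot\bar w$, and $g_v$ acts by translation by $v$, a direct computation gives
\[
\alpha_{T_{\bar w}}(g_v, h_{\bar w}^t)(z) = (z - v\cdot\bar w)\bar w + t\bar w^\perp + v
= z\bar w + t\bar w^\perp + \bigl(v - (v\cdot\bar w)\bar w\bigr).
\]

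Next I would invoke Lemma~\ref{lemma:multiple}, which tells us exactly that $v - (v\cdot\bar w)\bar w$ is a scalar multiple of $\bar w^\perp$. Writing $v - (v\cdot\bar w)\bar w = \lambda\bar w^\perp$ for some $\lambda \in \Z_p$ (depending on $v$ and $\bar w$), the above expression becomes $z\bar w + (t+\lambda)\bar w^\perp$, which is exactly $h_{\bar w}^{t+\lambda}(z)$. Hence $\alpha_{T_{\bar w}}(g_v, h_{\bar w}^t) = h_{\bar w}^{t+\lambda} \in H_{\bar w}$, proving that $H_{\bar w}$ is invariant under the action.

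I do not expect any serious obstacle here: once one correctly interprets the action $\alpha_T$ (which juxtaposes functions as composition on the right by $T(g^{-1})$ and on the left by $g$), the statement reduces to a one-line calculation plus an appeal to Lemma~\ref{lemma:multiple}. The only subtle point is keeping straight that $h_{\bar w}^t$ is a function from $Y = \Z_p$ to $X = \Z_p \times \Z_p$, so $T_{\bar w}(g_v^{-1})$ acts on the input in $Y$ while $g_v$ acts on the output in $X$; this is precisely what makes the computation land back in $H_{\bar w}$, with the parameter $t$ shifted by $\lambda$.
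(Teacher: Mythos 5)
Your proposal is correct and follows essentially the same argument as the paper: unwind $\alpha_{T_{\bar w}}(g_v, h_{\bar w}^t)(z)$ to get $z\bar w + t\bar w^\perp + (v - (v\cdot\bar w)\bar w)$ and then apply Lemma~\ref{lemma:multiple} to absorb the extra term into a shift of the parameter $t$. No differences worth noting.
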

\begin{proof}
    For every $t\in\Z_p$ and $v\in \Z_p\times \Z_p$, we have
    \begin{align*}
        g_v \circ h_{\bar w}^t \circ T_{\bar w}(g_v^{-1})(z) &= g_v \circ h_{\bar w}^t \circ T_{\bar w}(g_{-v})(z) \\
        &= g_v\left(h_{\bar w}^t\left(k_{-v\cdot \bar w}(z)\right)\right) \\
        &= g_v\left(h_{\bar w}^t(z - v \cdot \bar w) \right) \\
        &= g_v \left( (z-v\cdot \bar w)\bar w + t \bar{w}^\perp \right) \\
        &= (z - v\cdot \bar w)\bar w + t \bar{w}^\perp + v \\
        &= z\bar w +t\bar{w}^\perp + (v - \left(v\cdot \bar w)\bar w\right) \\
        &= z\bar w + t\bar{w}^\perp + t^\prime \bar{w}^\perp \\
        &= h_{\bar w}^{t + t^\prime}(z)
    \end{align*}
  for a suitable real number $t'$, because of Lemma~\ref{lemma:multiple}.
Hence, $\alpha_{T_{\bar{w}}}(h_{\bar{w}}^{t}) \in H_{\bar w}$, which concludes the proof.
\end{proof}

Proposition~\ref{prop:permutant} enables the direct definition of a linear GENEO using Theorem~\ref{teo:linear}. Denoting with $\mu_{\bar w}$ the measure $\mu_{\bar w}(h) = 1/|H_{\bar w}|$ if $h \in H_{\bar w}$, $\mu_{\bar w}(h) = 0$ otherwise, it is straightforward to check that $\mu_{\bar w}$ is a generalized $T_{\bar{w}}$-permutant measure. Hence, the operator $F_{\mu_{\bar w}}$ that maps 
$\varphi \mapsto {\sum_{h \in X^Y} \p h \ \mu_{\bar w}(h)}=\sum_{h \in H_{\bar w}} \frac{\varphi h}{|H_{\bar w}|}$ is a linear GENEO.

\subsection{Experimental setup}
To enable the application of GENEO operators, all methods use a zero-padded version of MNIST images resized to 29×29 pixels, where 29 is chosen as the closest prime number to the original dimension. The CNN classifier consists of three convolutional layers (respectively, 32, 64, and 128 filters) with ReLU activations and max pooling, followed by three fully connected layers (respectively, 256, 128, and 10 units) with dropout regularization (0.25). The network was trained on translated MNIST images ($29\times29$ pixels) for $10$ epochs using Adam optimizer with learning rate 0.001 and cross-entropy loss. On clean test data, the classifier achieved an accuracy above 98\%.

In this study, we compare four approaches: three baseline methods from standard literature (autoencoder, convolutional autoencoder, and variational autoencoder) and a GENEO-enhanced version of the convolutional autoencoder that consists in a preliminary GENEO layer. The standard autoencoder (AE, \cite{hinton2006reducing}) consists of an encoder ($841\to256\to128\to32$) and symmetric decoder with ReLU activations. The convolutional autoencoder (CAE, \cite{masci2011stacked}) uses two convolutional layers with max pooling in the encoder, a 64-dimensional bottleneck, and transposed convolutions in the decoder. The variational autoencoder (VAE, \cite{kingma2013auto}) follows the standard architecture with encoder layers ($841\to256\to128$) mapping to 32-dimensional mean and log-variance vectors, using the reparameterization trick 
and trained with reconstruction loss plus KL divergence. For the GENEO-based approach that we refer as GENEO CAE, we generate 28 distinct GENEOs, where 28 is the number of unit vectors in $\Z_{29}\times\Z_{29}$. Each unit vector generates a different linear GENEO using the previously defined strategy. The first GENEO-layer is formed by taking the 28-tuple of GENEO outputs, resulting in a combined $28\times29$ dimensional representation. This GENEO-transformed data serves as input to the convolutional autoencoder architecture. This architecture combines the equivariant feature extraction of GENEOs with the spatial processing capabilities of convolutional networks. All models were trained for 30 epochs with Adam optimizer (learning rate 0.001) and MSE reconstruction loss. To evaluate reconstruction robustness, we corrupt test images with salt \& pepper noise at levels ranging from 0.05 to 0.4, then measure the drop in CNN classification accuracy when the classifier receives reconstructed images from each model instead of the original clean images.

\subsection{Results}

We evaluate the reconstruction performance of the four approaches on MNIST digits corrupted with salt \& pepper noise at levels ranging from 5\% to 40\%.

\begin{figure}[t]
\centering
\includegraphics[width=0.48\textwidth]{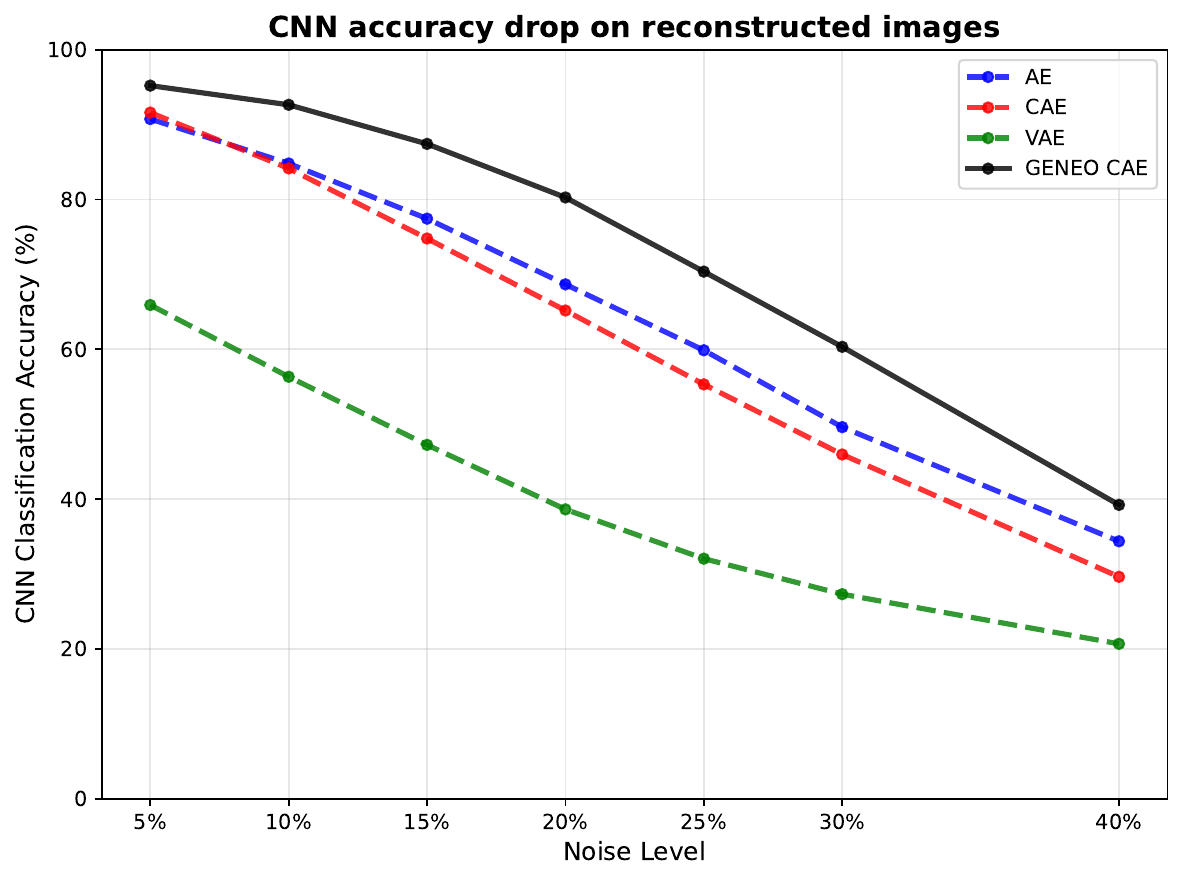}
\includegraphics[width=0.48\textwidth]{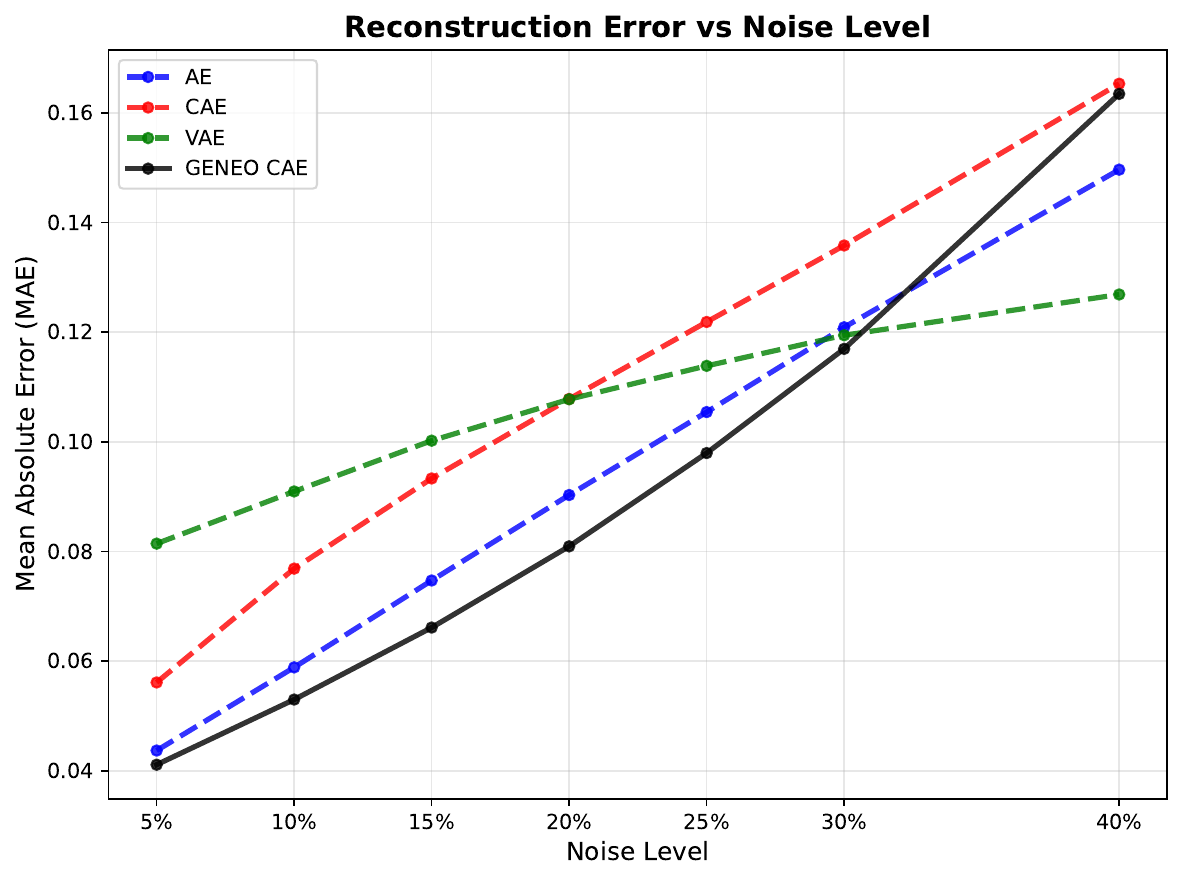}
\caption{Left: CNN classification accuracy on reconstructed images. Right: Mean Absolute Error (MAE) between reconstructed and clean images as a complementary measure. The GENEO CAE achieves both the highest classification accuracy and, with the sole exception of 40\% noise, the lowest reconstruction error.}
\label{fig:accuracy}
\end{figure}

Figure~\ref{fig:accuracy} shows the CNN classification accuracy (left) and Mean Absolute Error (right) as a complementary reconstruction measure. The GENEO CAE demonstrates superior performance, achieving the highest classification accuracy across all noise levels while also maintaining the lowest MAE in most cases. This dual improvement indicates that the equivariant features extracted by GENEO operators not only preserve semantic content for accurate classification but also enable better pixel-level reconstruction fidelity. At 15\% noise corruption, the GENEO CAE preserves approximately 90\% accuracy while both standard and convolutional autoencoders drop below 80\%, with consistently lower reconstruction error. The complementary MAE plot further confirms the reconstruction quality of the GENEO CAE across the noise spectrum.



\begin{figure}[t]
    \centering
    \includegraphics[width=0.48\textwidth]{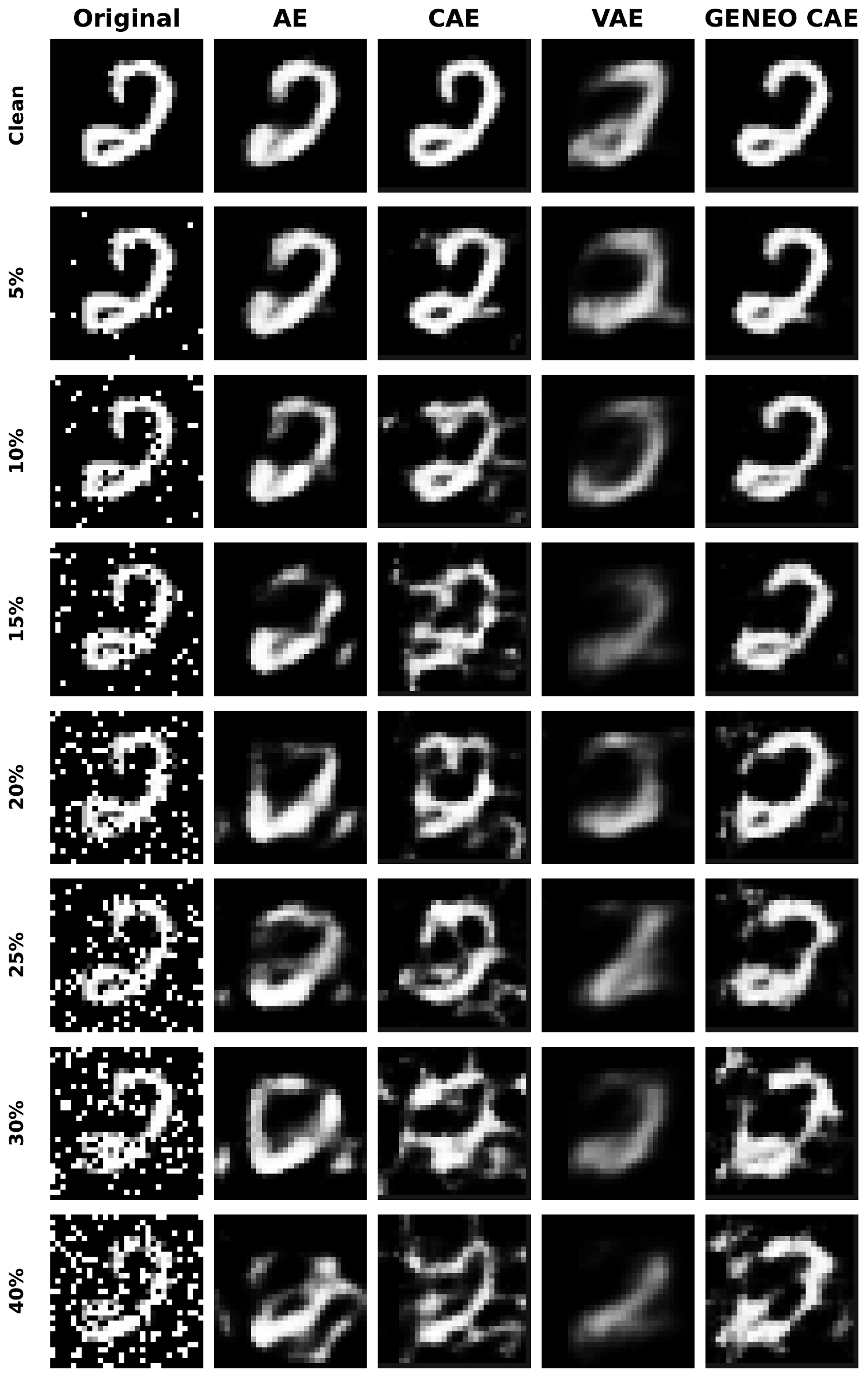}
    \includegraphics[width=0.48\textwidth]{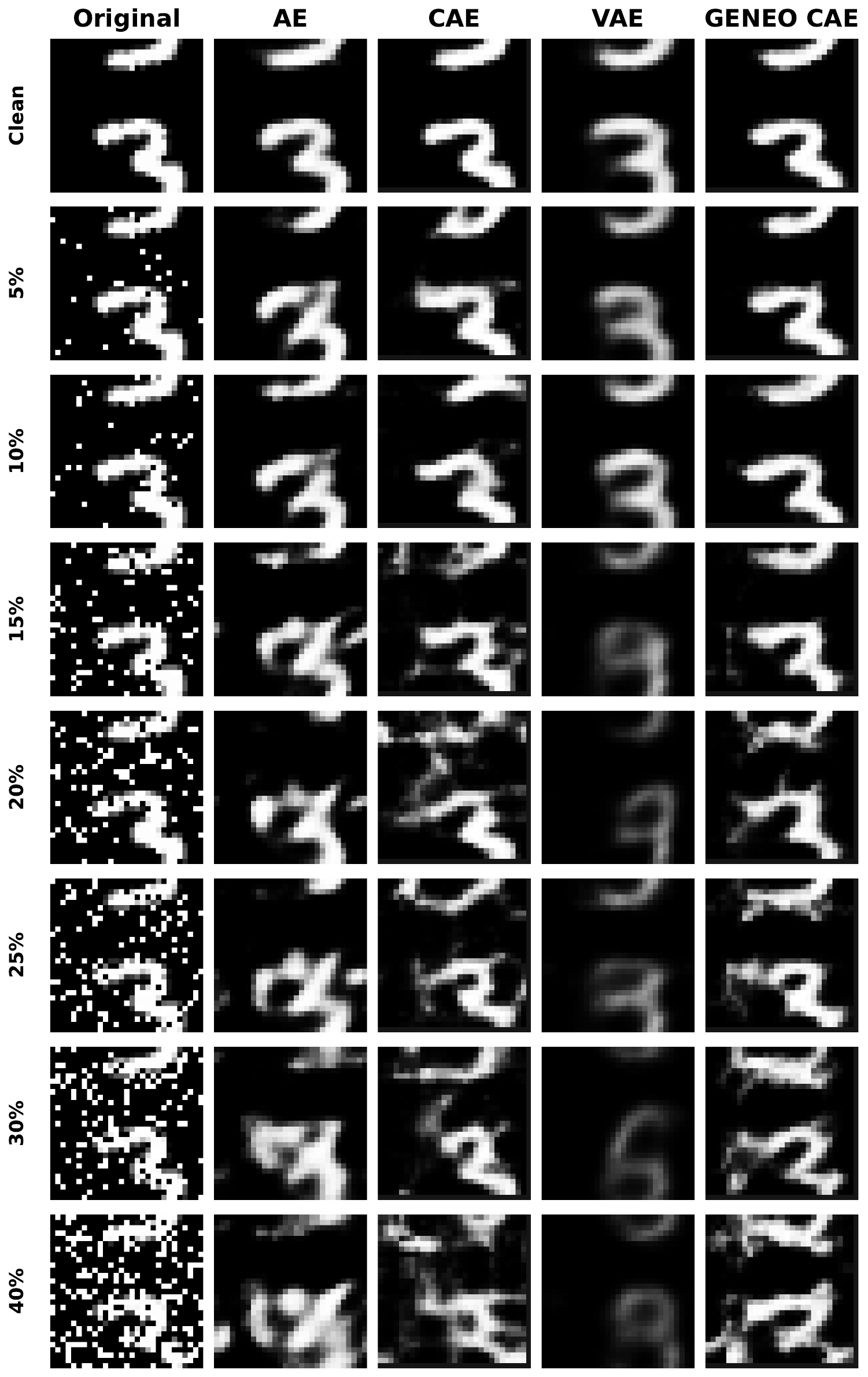}
    \caption{Reconstruction results for test images ID 400 (left) and 500 (right) across increasing salt \& pepper noise levels. Columns show original images and reconstructions from AE, CAE, VAE, and GENEO CAE. The GENEO CAE demonstrates superior ability to preserve digit structure under heavy noise corruption.}
    \label{fig:recon_combined}
\end{figure}

Figure~\ref{fig:recon_combined} provides qualitative visualizations of reconstruction performance on two representative test digits. Even at low noise levels, the GENEO approach produces cleaner images, with fewer blurred or artifact-laden reconstructions. As corruption increases, the advantages of the GENEO layer become more evident. Standard methods quickly lose structural integrity, while GENEO continues to produce recognizable digits, although heavily corrupted. These results validate our hypothesis that GENEO operators extract noise-robust equivariant features that enable superior reconstruction under corruption. The consistent improvement across all evaluation metrics—classification accuracy, reconstruction error, and visual quality—demonstrates that leveraging the mathematical structure of the translation group through GENEO preprocessing provides a principled approach to enhancing autoencoder robustness.

\section{Conclusions}
\label{Conclusions}
In this article we have established a new theorem showing that every linear GENEO between two finite perception pairs can be represented as a sum of the input data, reparameterized by maps weighted through a suitable generalized $T$-permutant measure, under the assumption that the group actions are transitive. This result generalizes a previous theorem that required the input and output data spaces to coincide. Such a generalization is not only of theoretical importance but also significantly broadens the applicability of GENEOs in concrete settings. To demonstrate this practical relevance, we concluded the article with an application of the new theorem that enhances the performance of autoencoders.

Future work will address two directions: removing from the theorem the assumption that the domains of the data functions are finite, and developing concrete methods for constructing generalized permutant measures.

\section*{Code availability}
The experiments and applications can be reproduced by installing the open-source Python package, available at \url{https://doi.org/10.5281/zenodo.18230049}.

\section*{Data availability}
No datasets were generated during the current study. The dataset analysed is available at
MNIST~\cite{lecun1998mnist}

\section*{Acknowledgments}
F.C., P.F. and N.Q. conducted a portion of their research within the framework of the CNIT WiLab National Laboratory and the WiLab-Huawei Joint Innovation Center. P.F.'s work received partial support from INdAM-GNSAGA, the COST Action CaLISTA, and the HORIZON Research and Innovation Action PANDORA. F.C. was affiliated with the University of Pisa and CNR-ISTI (Pisa) and with Inria, Centre Inria d'Université Côte d'Azur, during the preparation of this work.

\bibliographystyle{plain}
\bibliography{bib}

\end{document}